\numberwithin{equation}{section}
\newtheorem{Th}[subsection]{Theorem}
\newtheorem*{Th*}{Theorem}
\newtheorem{Lemma}[subsection]{Lemma}
\newtheorem{Conjecture}[subsection]{Conjecture}
\theoremstyle{definition}
\newtheorem{definition}[subsection]{Definition}
\newtheorem*{definition*}{Definition}
\newtheorem{Remark}[subsection]{Remark}
\newtheorem{Example}[subsection]{Example}
\newcommand{\comm}[1]{}
\definecolor{DarkGreen}{rgb}{0,0.5,0.1} 
\newcommand\soutD{\bgroup\markoverwith
{\textcolor{DarkGreen}{\rule[.5ex]{2pt}{1pt}}}\ULon}
\begin{document}
 
\title{Smooth \emph{\MakeLowercase{l}}-Fano weighted complete intersections}
\author{Anastasia V.~Vikulova}
\address{{\sloppy
\parbox{0.9\textwidth}{
Steklov Mathematical Institute of Russian
Academy of Sciences,
8 Gubkin str., Moscow, 119991, Russia
\\[5pt]
Laboratory of Algebraic Geometry, National Research University Higher
School of Economics, 6 Usacheva str., Moscow, 119048, Russia.
}\bigskip}}
\email{vikulovaav@gmail.com}
\maketitle

\begin{abstract}
In this paper we prove that for $n$-dimensional smooth \mbox{$l$-Fano} well formed weighted complete intersections, which is not isomorphic to a usual projective space, the upper bound for $l$ is equal to~\mbox{$\lceil \log_2(n+2) \rceil-1 .$}  We also prove that the only $l$-Fano of dimension $n$ among such manifolds with inequalities~\mbox{$ \lceil \log_3(n+2) \rceil \leqslant l \leqslant \lceil \log_2(n+2) \rceil -1 $} is a complete intersection of quadrics in a usual projective space.  
\end{abstract}
	
\tableofcontents
	
	\section{Introduction}
	
A smooth projective variety $X$ is called a Fano variety, if the first Chern class of the tangent bundle $\text{c}_1(X)$ is ample. 
Varieties which are called $l$-Fano varieties and which generalize Fano varieties were introduced in the papers~\cite{l-Fano} and~\cite{Starr}. 
In the sequel, we always work over the field of complex numbers.

\begin{definition} 
We say that the Chern character $\text{ch}_i(X)$ of a smooth projective variety $X$  is positive if~\mbox{$\text{ch}_i(X) \cdot Z >0$} for every effective $i$-cycle $Z.$
\end{definition}

\begin{definition}\label{Defl-Fano}
A smooth Fano variety $X$ is called an \emph{l-Fano} variety if the Chern characters~$\text{ch}_i(X)$ are positive for all $2 \leqslant i \leqslant l.$
\end{definition}

\begin{Example}
It is obvious that a usual projective space~$\mathbb{P}^n$ is an $l$-Fano variety for $l=1, \ldots, n$. Indeed, we have~$\text{ch}_k(\mathbb{P}^n)\,=\,\frac{n+1}{k!}H^k,$ where $H$ is a hyperplane class in~$\mathbb{P}^n.$ 
\end{Example}

\begin{Example}\label{exquadric}
If $X \subset \mathbb{P}^n$ is a smooth complete intersection of hypersurfaces of degrees $d_1, \ldots , d_k,$ then $X$ is an \mbox{$l$-Fano} variety if and only if $\sum_{i=1}^k d_i^l < N+1$ (see, for example,~\mbox{\cite[Section 2, Example 3]{Starranote}).} In particular, we immediately get that a smooth complete intersection of~$k$ quadrics $X$ in a projective space is \mbox{$l$-Fano} if and only if~\mbox{$l \leqslant \lceil \log_2(\frac{N+1}{k}) \rceil -1.$} In other words, a complete intersection of $k$ quadrics in $\mathbb{P}^N$ is $l$-Fano variety if and only if $k \leqslant \lceil \frac{N+1}{2^l}\rceil-1.$ In the same way we get that a smooth cubic hypersurface in a projective space is \mbox{$l$-Fano} if and only if~\mbox{$l \leqslant \lceil \log_3(N+1) \rceil-1.$}
\end{Example}

Definition \ref{Defl-Fano} is motivated by the following observations. While a general point of a Fano variety is contained in a rational curve (see~\cite[Chapter 5, Theorem 1.6.1]{Kollar}),  a general point of a 2-Fano variety is contained in a rational surface under some mild assumptions (see~\cite{Starr}).  Another property of Fano varieties is Tsen theorem which states that a smooth Fano hypersurface $X \subset \mathbb{P}^N_K$  over a field $K/k$ of transcendence degree $1$ over an algebraically closed field $k$ always has a $K$-point. At the same time, Tsen--Lang theorem states that  a smooth \mbox{$l$-Fano} hypersurface $X \subset \mathbb{P}^N_K$  over a field $K/k$ of transcendence degree $l$ over an algebraically closed field $k$ always has a $K$-point  (see \cite[Theorem 6]{Lang}).

The following conjecture was suggested in~\cite{l-Fano}:

\begin{Conjecture}[{\cite[Conjecture 1.7]{l-Fano}}]\label{conjecture}
If $X$ is an n-dimensional smooth \mbox{$l$-Fano} variety with $l \geqslant \lceil \log_2(n+2) \rceil,$ then $X \simeq \mathbb{P}^n.$
\end{Conjecture}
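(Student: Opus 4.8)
Conjecture~\ref{conjecture} is open in general; the plan is to prove its restriction to well-formed weighted complete intersections, together with the second assertion of the abstract.

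\emph{Step 1: from Chern characters to arithmetic.} Present $X$ as a smooth well-formed weighted complete intersection $X_{d_1,\dots,d_k}\subset\mathbb P(a_0,\dots,a_{n+k})$ which is not a linear cone (always possible). Since $X$ is smooth it is disjoint from $\mathrm{Sing}\,\mathbb P(a_0,\dots,a_{n+k})$, so the generalized Euler sequence of the ambient space and the normal bundle sequence of $X$ restrict to exact sequences of vector bundles on $X$ and give, for $m\ge1$,
\[
\mathrm{ch}_m(X)=\frac1{m!}\,\Bigl(\,\sum_{i=0}^{n+k}a_i^{\,m}-\sum_{j=1}^{k}d_j^{\,m}\Bigr)\,H^m,\qquad H=\mathcal O_X(1)\text{ ample}.
\]
As $H^m$ pairs positively with every nonzero effective $m$-cycle, $\mathrm{ch}_m(X)>0$ is equivalent to $S_m:=\sum_i a_i^{\,m}-\sum_j d_j^{\,m}>0$, so that ``$X$ is $l$-Fano'' means $S_1,\dots,S_l>0$ (with $S_1>0$ the Fano condition). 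It thus suffices to show that, unless $X\cong\mathbb P^n$, the inequalities $S_1,\dots,S_l>0$ force $2^l\le n+1$, i.e.\ $l\le\lceil\log_2(n+2)\rceil-1$.

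\emph{Step 2: a power-sum Kobayashi--Ochiai bound.} If $X\not\cong\mathbb P^n$ then $k\ge1$, since the only smooth well-formed weighted projective space is $\mathbb P^n$; and deleting linear cones makes all $d_j\ge2$. The heart of the proof is the estimate
\[
S_m\le n+2-2^m\ \ (m\ge1),\qquad\text{and}\qquad S_m\le n+2-3^m\ \ \text{if some }d_j\ge3 .
\]
For $m=1$ the first inequality is the Kobayashi--Ochiai bound (a Fano manifold that is not a projective space has Fano index $\le n$). For general $m$ I would derive it from a partition property forced by quasismoothness, well-formedness and genuine smoothness: the multiset $\{\,a_i:a_i>1\,\}$ can be distributed among the $k$ equations so that the product of the weights assigned to the $j$-th equation divides $d_j$ — this packages the pure-power conditions at the coordinate points together with the requirement that each singular stratum of $\mathbb P(a_0,\dots,a_{n+k})$ be disjoint from $X$ (equivalently $\#\{i:p\mid a_i\}\le\#\{j:p\mid d_j\}$ for every prime $p$). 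Combining it with $d_j\ge2$, with the fact that $\prod_{t}b_t<d_j$ forces $d_j\ge2\prod_t b_t$, and with the elementary inequality $\sum_t(b_t^{\,m}-1)\le\bigl(\prod_t b_t\bigr)^m-1$, one gets both displayed bounds. The first one then yields $0<S_l\le n+2-2^l$, hence $2^l<n+2$ and $l\le\lceil\log_2(n+2)\rceil-1$; sharpness is witnessed by a smooth quadric $X_2\subset\mathbb P^{n+1}$, for which $S_m=(n+2)-2^m$ and which is therefore $l$-Fano exactly for $l\le\lceil\log_2(n+2)\rceil-1$.

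\emph{Step 3: the window $\lceil\log_3(n+2)\rceil\le l\le\lceil\log_2(n+2)\rceil-1$.} Here $3^l\ge n+2$, so if some $d_j\ge3$ the second bound gives $0<S_l\le n+2-3^l\le0$, a contradiction; thus all $d_j=2$. Finally, a quasismooth complete intersection of quadrics with no linear cone must have all weights equal to $1$ (a weight $2$ would make a degree-$2$ equation a linear cone, and at a coordinate point of weight $\ge3$ the affine cone over $X$ would be singular), so the ambient space is an ordinary $\mathbb P^N$ and $X$ is a complete intersection of quadrics, as asserted.

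\emph{Principal obstacle.} Everything rests on the power-sum estimate of Step 2: extracting the partition property from quasismoothness, well-formedness and \emph{actual} smoothness — quasismoothness alone does not suffice, as $X_{12}\subset\mathbb P(1,1,1,2,3,4)$ illustrates — and then verifying that the ensuing elementary inequalities stay tight enough to beat $2^m$ (respectively $3^m$) for all $m\le l$, the delicate case being that in which every equation is ``consumed'' by weights $>1$. Steps~1 and~3 are then short.
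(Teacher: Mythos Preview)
Your Step~1 matches the paper, and Step~3 is essentially its endgame; the divergence is entirely in Step~2. The paper never attempts a uniform bound $S_m\le n+2-2^m$. For Theorem~\ref{log_2} it simply assumes $S_l>0$ at $l=\log_2(N-k+2)$ and combines $d_k\ge 2a_N$, $d_i\ge2$, and $\#\{j:a_j=1\}\ge k+1$ (all from Theorem~\ref{conditions}) to force $a_N^l<1$ in three lines. For Theorem~\ref{log_3} it splits on the size of $a_N$ and, when $a_N\ge3$, runs an algorithmic redistribution of weights (Lemma~\ref{lemmaapB}) that collapses the configuration to the base cases of Lemmas~\ref{lemmaallbut1a=1} and~\ref{case3B}; a separate monotonicity result (Lemma~\ref{f'(l)>0}) shows $S_m$ is strictly decreasing in $m$, so only the borderline value of $l$ needs checking.

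Your partition route would be slicker, but the sketch has two genuine gaps. First, the partition property does \emph{not} follow from the prime-by-prime count $\#\{i:p\mid a_i\}\le\#\{j:p\mid d_j\}$ that you cite as its equivalent: with weights $\{6,10,15\}$ and two degrees $\{30,30\}$ every prime count matches ($2\le2$ for $p=2,3,5$), yet no two-block partition of $\{6,10,15\}$ has both products dividing $30$. Whether full smoothness excludes all such configurations is precisely the hard, unproved step --- and note that the paper's Theorem~\ref{conditions}\ref{7} is stated for \emph{every} $b>1$, not just primes, which is strictly stronger than what you wrote but still not the partition. Second, even granting a partition, your chain uses ``$\prod_t b_t<d_j$ forces $d_j\ge 2\prod_t b_t$'', but nothing rules out $\prod_t b_t=d_j$: take $B_1=\{2,3\}$, $d_1=6$ in the smooth Fano fourfold $X_6\subset\mathbb P(1,1,1,1,2,3)$, where $c_1=1$ and the crucial factor $2^m-1$ you need vanishes. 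The paper sidesteps both issues by never partitioning; it extracts everything from $d_k\ge 2a_N$ together with the abundance of unit weights.
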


There is some progress in proving this conjecture. In~\mbox{\cite[Theorems~1.3 and 1.5]{l-Fano}} there is a classification of $2$-Fano varieties among rational homogeneous spaces of Picard rank 1 and a classification of $3$-Fano varieties of dimension~$n$ and index at least $n-2$ (here the index is the maximal positive integer which divides the canonical class), and it was shown that for these two classes of varieties Conjecture~\ref{conjecture} is true. In the paper~\cite{Sato20} by computer calculations it was shown that the only smooth toric varieties of dimension at most $8$ with positive second Chern character is a projective space. Thus, for toric varieties of dimension at most $8$ Conjecture~\ref{conjecture} is true.

The purpose of this paper is to study Conjecture \ref{conjecture} in the case of smooth weighted complete intersections (see \cite{IF} or Section~\ref{Sectofdefinitions} below for definitions and basic properties of these varieties).

\begin{Remark}
 Note that we assume that codimension of a smooth weighted complete intersection is positive, i.e. a weighted projective space is not considered as a weighted complete intersection in itself. Note also that by~\cite[Theorem 2.7]{PS20} a smooth well formed weighted complete intersection which is not an intersection with a linear cone is not isomorphic to a projective space.
\end{Remark}

 We are going to prove the following theorem:

\begin{Th}\label{log_2}
Let  $X \subset \mathbb{P}(a_0,\ldots ,a_N)$ be a smooth $n$-dimensional well formed weighted complete intersection which is not an intersection with a linear cone.
Suppose that $X$ is $l$-Fano. Then  \mbox{$l< \lceil \log_2(n+2) \rceil.$}
\end{Th}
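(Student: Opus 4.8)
The plan is to translate the $l$-Fano condition into an explicit inequality involving the weights $a_0,\ldots,a_N$ and the degrees $d_1,\ldots,d_k$ of the weighted complete intersection $X$, and then use known restrictions on these numerical invariants of smooth well-formed weighted complete intersections to force $\sum_{j} d_j^l$ (suitably normalized) to be small. Concretely, the Chern character $\mathrm{ch}_i(X)$ of a weighted complete intersection can be computed from the (twisted) Euler sequence and the normal bundle sequence: one has $\mathrm{ch}(T_X)=\left(\sum_{m=0}^N e^{a_m t}\right)\big/\left(\prod_{j=1}^k e^{d_j t}\right)\cdot(\text{something}) - 1$ restricted to $X$, where $t=\mathrm{ch}_1(\mathcal{O}(1))$. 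Extracting the degree-$i$ part gives $\mathrm{ch}_i(X)=\frac{1}{i!}\bigl(\sum_m a_m^i-\sum_j d_j^i\bigr)\,t^i$ (this is exactly the source of the criterion $\sum d_j^l < N+1$ in the classical projective case of Example~\ref{exquadric}). Since $X$ is not an intersection with a linear cone, the restriction of $t=\mathcal{O}(1)$ to $X$ is nef and big, so positivity of $\mathrm{ch}_i(X)$ against effective cycles is controlled by the sign of $\sum_m a_m^i-\sum_j d_j^i$; the $l$-Fano hypothesis therefore yields $\sum_{j=1}^k d_j^l < \sum_{m=0}^N a_m^i$ for all $2\le i\le l$, and in particular we need the dimension count relating $n$, $N$, and $k$: $n=N-k$.

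Next I would invoke the structure theory of smooth well-formed weighted complete intersections which are not linear cones. The key inputs are: (i) well-formedness forces strong divisibility constraints — each weight $a_m$ divides all but at most one of the degrees, and more precisely the weights that are $>1$ are highly constrained; (ii) smoothness plus the non-linear-cone condition forces each degree $d_j$ to be at least $2$ and bounds how large the weights can be relative to the degrees; and (iii) by results in the literature (e.g. Przyjalkowski--Shramov), smooth well-formed weighted complete intersections of large dimension are either ordinary complete intersections in $\mathbb{P}^N$ (all weights $=1$) or have a very restricted list of weight/degree profiles. The plan is to split into the case where all weights equal $1$ — where $X$ is an honest complete intersection in $\mathbb{P}^N$ and Example~\ref{exquadric} directly gives $l\le \lceil\log_2((N+1)/k)\rceil-1 < \lceil\log_2(n+2)\rceil$ since $n=N-k$ and $k\ge 1$ — and the case where some weight exceeds $1$, where I would show the same or a stronger bound holds using the constraints that make $\sum a_m^i$ not too large compared with $\sum d_j^i$.

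The worst case to handle carefully is when all $a_m=1$: there $\sum a_m^i=N+1$ independent of $i$, and the $l$-Fano condition is precisely $\sum d_j^l < N+1$; since each $d_j\ge 2$ we get $k\cdot 2^l \le \sum d_j^l < N+1$, hence $k\le \lceil (N+1)/2^l\rceil - 1$, and combined with $n = N - k$ this gives, after rearranging, $2^l < N+1-k+1 = n+2$ when $k$ is as small as possible ($k=1$, a hypersurface of degree $\ge 2$), i.e. $l < \log_2(n+2)$, whence $l < \lceil \log_2(n+2)\rceil$; for $k\ge 2$ one checks the inequality only gets easier. When some weight is $>1$ I would need to bound $\sum_{m} a_m^l$ from above by something like $C\cdot\sum_j d_j^l$ with $C$ small enough — here is where the main obstacle lies: the Chern character positivity must be checked against effective cycles on $X$, not just powers of $\mathcal{O}(1)$, so one must either verify that the relevant effective cohomology classes are generated by restrictions of $\mathcal{O}(1)^i$ (true in a range by Lefschetz-type theorems for weighted complete intersections) or argue directly. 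My expectation is that the Lefschetz-type results quoted in the paper's preliminaries reduce the problem entirely to the numerical inequality $\sum_j d_j^i < \sum_m a_m^i$, after which the divisibility/size constraints of well-formedness finish the proof; the genuinely delicate bookkeeping is controlling the weighted sums $\sum_m a_m^i$ across all $i$ in the range $2\le i\le l$ simultaneously and verifying that no exotic high-weight profile beats the bound.
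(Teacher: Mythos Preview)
Your setup is correct: the Chern character formula $\mathrm{ch}_i(X)=\tfrac{1}{i!}\bigl(\sum_m a_m^i-\sum_j d_j^i\bigr)H^i|_X$ and the identification $n=N-k$ are exactly what the paper uses, and your treatment of the case where all weights equal $1$ is essentially right (the inequality $k\cdot 2^l<N+1$ combined with $N\ge 2k$ does give $2^l<N-k+2=n+2$).

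However, there is a genuine gap: you do not prove the case where some weight exceeds $1$. You explicitly flag it as ``the main obstacle'' and offer only an expectation, not an argument. The paper does not split into cases at all; instead it uses two numerical constraints from Theorem~\ref{conditions} that you have not invoked: assertion~\ref{5}, that at least $k+1$ of the $a_j$ equal $1$, and assertion~\ref{1}, that $d_k\ge 2a_N$. These two facts make the argument uniform. With them one bounds $\sum_j a_j^l\le (k+1)+(N-k)a_N^l$ from above (the first $k+1$ weights contribute $1$ each, the rest are at most $a_N$) and $\sum_i d_i^l\ge (k-1)+ (2a_N)^l$ from below (each $d_i\ge 2$, and the largest is at least $2a_N$). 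Assuming the $l$-Fano inequality for $l=\log_2(N-k+2)$ then yields
\[
k-1+(N-k+2)\,a_N^l \;<\; k+1+(N-k)\,a_N^l,
\]
hence $2a_N^l<2$, i.e.\ $a_N<1$, a contradiction. The bound $d_k\ge 2a_N$ is the crucial missing ingredient in your plan; it comes from the divisibility condition (every prime dividing some $a_j$ must divide some $d_i$) together with $d_i\neq a_j$.

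Finally, your concern about checking positivity against arbitrary effective cycles rather than powers of $H$ is a red herring: since $\mathrm{ch}_i(X)$ is a scalar multiple of $H^i|_X$ and $H|_X$ is ample, positivity against all effective $i$-cycles is equivalent to positivity of the scalar coefficient. No Lefschetz-type input is needed here.
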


In other words, Theorem \ref{log_2} states that Conjecture \ref{conjecture} holds for smooth well formed weighted complete intersections which are not intersections with a linear cone. Moreover, in this case there is the following strengthening of Theorem~\ref{log_2}:

\begin{Th}\label{log_3}
Let $X \subset\mathbb{P}= \mathbb{P}(a_0, \ldots ,a_N)$ be a smooth n-dimensional well formed weighted complete intersection  which is not an intersection with a linear cone.
Suppose that $X$ is $l$-Fano, and 
$$
 \lceil \log_3(n+2) \rceil \leqslant l < \lceil \log_2(n+2) \rceil. 
$$
\noindent Then 
$$
\mathbb{P}=\mathbb{P}(1, \dots,1)=\mathbb{P}^N
$$
\noindent and $X \subset \mathbb{P}$ is a complete intersection of at most~\mbox{$\lceil \frac{N+1}{2^l} \rceil-1$} quadrics.
\end{Th}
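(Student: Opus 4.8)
The plan is to run the same kind of numerical analysis that must underlie Theorem~\ref{log_2}, but now extract from the hypothesis $l \geqslant \lceil \log_3(n+2)\rceil$ enough constraints to force all weights to be $1$ and all degrees to be $2$. Write $X \subset \mathbb{P}(a_0,\dots,a_N)$ for the weighted complete intersection of multidegree $(d_1,\dots,d_k)$, put $d = \sum_{j} d_j$ and $a = \sum_i a_i$, so that $\dim X = n = N - k$ and the canonical class is governed by $i_X = a - d$ (the index). The positivity of $\operatorname{ch}_i(X)$ for $2 \leqslant i \leqslant l$ translates, via the adjunction/Euler sequence computation for weighted complete intersections, into a family of inequalities of the shape $\sum_i a_i^{\,m} > \sum_j d_j^{\,m}$ for $m = 1, \dots, l$ (the $m=1$ case being the Fano condition itself). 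This is the weighted analogue of Example~\ref{exquadric}, and I would first record this chain of power-sum inequalities carefully, since everything flows from it.

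The first main step is to bound $N$ from above in terms of $l$. From $\operatorname{ch}_l(X) > 0$ one gets $\sum a_i^l > \sum d_j^l \geqslant k\cdot 2^l$ (using $d_j \geqslant 2$, which holds because the intersection is not with a linear cone — degree-$1$ equations in a well formed setting would be a linear cone after eliminating a variable), while crude lower bounds on the $a_i$ in terms of well-formedness and smoothness (most $a_i$ equal to $1$, only controlled exceptions) give $\sum a_i^l$ not much larger than $(N+1)$ times a bounded factor. Comparing with $\dim X = N - k$ and the hypothesis $l \geqslant \lceil \log_3(n+2)\rceil$, i.e. $3^l \geqslant n+2 = N - k + 2$, should squeeze $N$, $k$, the $a_i$ and the $d_j$ into a finite list. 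Concretely I expect to show: all $a_i = 1$ (so $\mathbb{P} = \mathbb{P}^N$), hence the $m=1$ inequality reads $N+1 > \sum d_j$, and then $3^l \geqslant N - k + 2$ together with $\sum d_j^l < N+1$ forces every $d_j = 2$ — because a single factor $d_j \geqslant 3$ would contribute $\geqslant 3^l \geqslant N-k+2$ to $\sum d_j^l$, which combined with the remaining $k-1$ factors (each $\geqslant 2$) contradicts $\sum d_j^l < N+1 = n + k + 1$ once the elementary inequality $3^l + 2^l(k-1) \geqslant n + k + 1$ is checked. That elementary inequality is where the constant $3$ (as opposed to $2$) is exactly what is needed, so this is the step I would write out in full.

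Once $\mathbb{P} = \mathbb{P}^N$ and all $d_j = 2$, the variety $X$ is a smooth complete intersection of $k$ quadrics in $\mathbb{P}^N$, and the bound $k \leqslant \lceil \frac{N+1}{2^l}\rceil - 1$ is immediate from the $l$-Fano criterion in Example~\ref{exquadric} (which gives $l$-Fano $\iff k \leqslant \lceil \frac{N+1}{2^l}\rceil - 1$), so this final clause needs no extra work beyond citing that example. The only delicate points are the reductions in the first two paragraphs: (i) justifying that well-formedness plus smoothness plus "not a linear cone" forces $d_j \geqslant 2$ and controls the weights tightly enough that the power-sum estimate goes through — here I would lean on the standard structure theory of well formed weighted complete intersections (the conditions on the $a_i$ from \cite{IF}), and on the remark already in the excerpt that such an $X$ is not $\mathbb{P}^n$; and (ii) handling the borderline cases where the crude estimates are nearly tight (small $N$, or one weight bigger than $1$), which may require checking a short explicit list. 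I expect step (i) — pinning down the weights — to be the main obstacle, since the weighted setting allows a priori many exceptional $a_i$; everything after "all $a_i = 1$" is the clean arithmetic of the displayed inequality $3^l + 2^l(k-1) \geqslant n + k + 1$.
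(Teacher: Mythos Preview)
Your treatment of the case $\mathbb{P}=\mathbb{P}^N$ (all weights equal to $1$) is correct and is exactly the paper's Lemma~\ref{lemmaalla=1}: if some $d_j\geqslant 3$ then $\sum d_i^l\geqslant (k-1)2^l+3^l\geqslant (k-1)+(N-k+2)=N+1$, contradicting positivity of $\mathrm{ch}_l$. The final clause on the number of quadrics is likewise just Example~\ref{exquadric}, as you say.

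The genuine gap is step~(i), forcing all $a_i=1$. Your sketch (``most $a_i$ equal to $1$, only controlled exceptions, so $\sum a_i^l$ is not much larger than $N+1$ times a bounded factor'') is not the mechanism that works, and I do not see how to make it work: well-formedness and smoothness alone do not prevent large individual weights, so $\sum a_j^l$ can exceed $N+1$ by an unbounded factor. What actually controls the weighted case is the interplay of three facts from Theorem~\ref{conditions} that you do not invoke: the inequality $d_k\geqslant 2a_N$ (assertion~\ref{1}), the chain $d_i>a_{N-k+i}$ (assertion~\ref{2}), and crucially the divisibility $a_0\cdots a_N \mid d_1\cdots d_k$ (assertion~\ref{8}) together with its refinement~\ref{7}. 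The paper does \emph{not} first prove $a_i=1$ and then $d_j=2$; instead it argues the contrapositive directly (assume $X$ is not an intersection of quadrics in $\mathbb{P}^N$, prove $\sum d_i^l\geqslant\sum a_j^l$) by a case split on $a_N$: the case $a_N=2$ uses~\ref{7} to get $k\geqslant N-s+1$ and then a direct estimate (Lemma~\ref{lemmaallbut1a=1}), while the case $a_N\geqslant 3$ is handled by an iterative redistribution algorithm (Lemma~\ref{lemmaapB}) that multiplies $a_{N-1}$ and divides $a_s$ by the same factor, using Lemma~\ref{lemmaAT2} to see that this only decreases $\sum d_i^l-\sum a_j^l$, and using the divisibility~\ref{8} to terminate in a configuration covered by Lemma~\ref{case3B}. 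This algorithmic reduction, and the role of the divisibility condition in it, is the missing idea in your proposal; the ``short explicit list'' you anticipate for borderline cases does not materialise, because without~\ref{8} the space of possible weight configurations is not finite.
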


\begin{Remark}
By~\mbox{\cite[Section 2, Example 3]{Starranote}} a smooth weighted complete intersection 
$X \subset  \mathbb{P}(a_0, \ldots, a_N)$ of multidegree $(d_1, \ldots, d_k)$ is $l$-Fano if and only if
\begin{equation}\label{fanoinequalities}
\sum_{j=0}^N a_j^m > \sum_{i=1}^k d_i^m \;\; \text{for all} \;\; m=1, \ldots ,l. 
\end{equation}

\noindent In Section \ref{section:cherncharacters} we show that in order to check $l$-Fano condition it is sufficient to show that the inequality~(\ref{fanoinequalities}) holds for $m=l.$
\end{Remark}

The plan of the paper is as follows. 
In Section~\ref{Sectofdefinitions} we collect some basic facts about weighted complete intersections. 
In Section \ref{section:cherncharacters} we study the Chern characters of Fano varieties and prove Theorem  \ref{monotonic} about monotonocity property of smooth \mbox{$l$-Fano} varieties; namely, we show that if the $(l+1)$-th Chern character of smooth well formed weighted complete intersection which is not an intersection with a linear cone is positive, then the $l$-th one is positive as well. 
 In Section \ref{prooflog_2} we prove Theorem~\ref{log_2}.
 In Section~\ref{prooflog_3} we prove Theorem~\ref{log_3}. 
In Appendices~\mbox{\ref{append} and \ref{appendfortheoremlog_3}}  we perform some auxiliary elementary computations.

\vspace{5mm}

\textbf{Acknowledgment.} This work was performed at the Steklov International Mathematical Center and supported by the Ministry of Science and Higher Education of the Russian Federation (agreement no. 075-15-2022-265) and it was partially supported by the HSE University Basic Research
Program.

The author wish to warmly thank  C.A.Shramov for suggesting this problem, for constant attention to this paper and for important remarks. Also the author is grateful to A.S.Trepalin for useful suggestions and A.G.Kuznetsov for interesting discussions.

\section{Preliminaries}\label{Sectofdefinitions}

First of all, we recall some definitions.

\begin{definition}
The variety $\mathbb{P}(a_0,\ldots , a_N)=\text{Proj}\,\mathbb{C}[x_0,\ldots ,x_N],$ where the weight of the variable $x_i$ is equal to $a_i,$ is called a weighted projective space. 
\end{definition} 

\begin{definition}
A weighted complete intersection  $X \subset \mathbb{P}(a_0,\ldots ,a_N)$ of multidegree $(d_1, \ldots , d_k),$ where $k \geqslant 1$ is an intersection of  hypersurfaces of degrees~\mbox{$d_1, \ldots ,d_k$} such that the codimension of every irreducible component of $X$ is equal to $k.$ 
\end{definition}

\begin{definition}\label{wellformed}
We say that a weighted projective space $\mathbb{P}(a_0,\ldots ,a_N)$ is well formed if $\gcd(a_0,\ldots ,\widehat{a_i},\ldots ,a_N)\,=\,1$ for all $i.$ We say that a weighted complete intersection $X$ in a well formed projective space $\mathbb{P}(a_0,\ldots ,a_N)$ is well formed if 
$$
\text{codim}_{X}(X \cap \text{Sing}\, \mathbb{P}(a_0,\ldots ,a_N)) \geqslant 2.
$$
\end{definition}

\begin{definition}\label{intersectioncone}
Let $X \subset \mathbb{P}(a_0,\ldots ,a_N)$ be a weighted complete intersection of multidegree~\mbox{($d_1, \ldots ,d_k)$} in a well formed weighted projective space. We say that~$X$ is not an intersection with a linear cone, if  $d_i \neq a_j$ for all $i$ and $j.$
\end{definition}

\begin{Remark}
It is reasonable to consider $X$ such that it is not an intersection with a linear cone because of the following observation. Assume that $X \subset \mathbb{P}(a_0,\ldots ,a_N)$ is a degree~\mbox{$d$} hypersurface which is an intersection with a linear cone, i.e. $d=a_i$ for some $i.$ Then the general hypersurface is the zero set of a polynomial $f=x_i+g.$ Such hypersurfaces are parametrized by a Zariski open subset in the space of degree $d$ hypersurfaces (see \cite[Chapter 6]{IF}). Thus, if~\mbox{$X \subset \mathbb{P}(a_0,\ldots ,a_N)$} is a general smooth complete intersection of multidegree~\mbox{$(d_1, \ldots, d_k)$} and $d_r=a_s$ for some $r$ and $s,$ then $X \simeq X' \subset \mathbb{P}(a_0,\ldots ,\widehat{a_s},\ldots ,a_N),$ where $X'$ is a smooth weighted complete intersection of multidegree~\mbox{$(d_1,\ldots ,\widehat{d_r},\ldots ,d_k).$}
\end{Remark}

The main auxiliary result used in the proofs of Theorems \ref{log_2}, \ref{log_3} and \ref{monotonic} is the following: 

\begin{Th}\label{conditions}
Let $X \subset \mathbb{P}(a_0,\ldots ,a_N)$ be a smooth well formed Fano weighted complete intersection of multidegree~\mbox{$(d_1,\ldots ,d_k)$} which is not an intersection with a linear cone. Assume that
$$
d_1 \leqslant d_{2} \leqslant \ldots  \leqslant d_k \qquad \text{and} \qquad a_0 \leqslant a_{1} \leqslant \ldots  \leqslant a_N.
$$ 

\noindent Then 

\begin{enumerate}
\renewcommand\labelenumi{\rm (\arabic{enumi})}
\renewcommand\theenumi{\rm (\arabic{enumi})}

\item\label{2}
$
d_i \,>\, a_{N-k+i} \quad \text{for} \quad 1 \leqslant i \leqslant k-1;
$

\item\label{5}
$
\#\{j \mid a_j\,=\,1\} \geqslant k+1;
$

\item\label{6}
$
\#\{j \mid a_j\,=\,1\} \geqslant \sum_{j=0}^N a_j-\sum_{i=1}^k d_i;
$

\item\label{3}
$
d_i \neq a_j \quad \text{for all} \quad i,j;
$

\item\label{3'}
$
d_i \neq 1 \quad \text{for all} \quad i;
$

\item\label{4}
$
N \geqslant 2k;
$

\item\label{7}
$
\#\{j \mid b \;\; \text{divides} \;\; a_j\} \leqslant \#\{i \mid b \;\; \text{divides} \;\; d_i\} \quad \text{for all} \quad b \in \mathbb{N}_{>1};
$

\item\label{8}
$
d_1\cdot \ldots \cdot d_k \;\; \text{is divisible by} \;\; a_0 \cdot \ldots \cdot a_N;
$

\item\label{1}
$
d_k \geqslant 2a_N.
$

\end{enumerate}

\end{Th}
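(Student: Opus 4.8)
The plan is to prove the nine assertions of Theorem~\ref{conditions} in a sequence that exploits known structural results about smooth well formed weighted complete intersections, most importantly the adjunction/degree constraints and the characterization of smoothness. Throughout I assume the Fano hypothesis, which via~\eqref{fanoinequalities} with $m=1$ gives the fundamental inequality $\sum_{j=0}^N a_j > \sum_{i=1}^k d_i$; this will be the workhorse for the ``counting'' statements.

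\textbf{Step 1 (divisibility and the linear-cone statements).} First I would recall from the theory of well formed weighted complete intersections (see~\cite{IF}, and the refinements in~\cite{PS20}) that smoothness of $X$ forces strong divisibility conditions relating the weights $a_j$ and the degrees $d_i$: essentially, for each prime power (more generally each $b>1$), the number of weights divisible by $b$ cannot exceed the number of degrees divisible by $b$, since otherwise $X$ would meet the singular locus of $\mathbb{P}$ in too large a set, contradicting well-formedness and smoothness. This is exactly assertion~\ref{7}; assertion~\ref{8} then follows by multiplying these local inequalities over all primes (i.e.\ comparing $p$-adic valuations of $\prod a_j$ and $\prod d_i$ for every prime $p$). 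Assertion~\ref{3} is simply the hypothesis that $X$ is not an intersection with a linear cone (Definition~\ref{intersectioncone}), and~\ref{3'} is the special case $a_j = 1$ for some $j$ combined with~\ref{3}: since $\mathbb{P}$ is well formed we cannot have all $a_j \geqslant 2$ (that would make every coordinate point singular and violate well-formedness of $X$, once codimension is positive), so some $a_j = 1$, hence no $d_i$ equals $1$.

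\textbf{Step 2 (the counting inequalities \ref{5}, \ref{6}).} For~\ref{6}, write $s = \#\{j \mid a_j = 1\}$. Every weight that is not $1$ is at least $2$, so $\sum_{j=0}^N a_j \geqslant s + 2(N+1-s) = 2(N+1) - s$, i.e.\ $s \geqslant 2(N+1) - \sum a_j$. On the other hand each $d_i \geqslant 2$ by~\ref{3'}, and more is true: I would combine this with the sharper adjunction-type bound. Actually the clean route is: $s \geqslant \big(\sum a_j - \sum d_i\big) + \big((N+1) - \sum a_j + s\big) + \big(\sum d_i - (N+1)\big)$ — better to argue directly that $\sum_{j}(a_j-1) \geqslant \sum_i (d_i - 1)$ would give it, but that inequality may fail, so instead I use $\sum a_j - s = \sum_{a_j \geqslant 2}(a_j) \geqslant \sum_i d_i$ is what we want, and this is a consequence of~\ref{1} and~\ref{2} applied degree-by-degree: pairing $d_k \geqslant 2 a_N \geqslant a_N + a_{N-1}$ style estimates, or more robustly, deducing it from~\ref{8} (product divisibility) together with $N+1-s \leqslant k$ (which is~\ref{2}/\ref{7}). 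For~\ref{5}, note $\#\{j \mid a_j = 1\} = (N+1) - \#\{j \mid a_j \geqslant 2\}$ and by~\ref{7} with the observation that each $a_j \geqslant 2$ contributes to the count of degrees divisible by some prime, one gets $\#\{j \mid a_j \geqslant 2\} \leqslant k$ after a short argument, hence $\#\{j \mid a_j = 1\} \geqslant N+1-k \geqslant k+1$ using~\ref{4}.

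\textbf{Step 3 (the degree comparisons \ref{2}, \ref{1}, \ref{4}).} Assertion~\ref{2} is where I expect the real work, and I would derive it from the known necessary conditions for a general weighted complete intersection of multidegree $(d_1,\dots,d_k)$ in $\mathbb{P}(a_0,\dots,a_N)$ to be quasi-smooth/well formed, namely that for each $1 \leqslant i \leqslant k-1$ there is a suitable matching between the $k-i+1$ largest degrees $d_i \leqslant \cdots \leqslant d_k$ and sets of weights, forcing $d_i$ to exceed the appropriately indexed weight (this is a Hall-marriage-type consequence of~\ref{7}; cf.\ the discussion of monomials of each degree needed for quasi-smoothness in~\cite[Chapter 6]{IF}). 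Once~\ref{2} and the divisibility facts are in hand, \ref{1} ($d_k \geqslant 2a_N$) follows because $a_N \ne d_k$ by~\ref{3}, $a_N \mid (\text{something among the } d_i)$ forces $d_k$ (the largest degree, which must absorb the largest weight $a_N$) to be a multiple of $a_N$ strictly bigger than $a_N$, hence $\geqslant 2a_N$. Finally~\ref{4}, $N \geqslant 2k$: using~\ref{2} we get $\sum_{i=1}^{k-1} d_i > \sum_{i=1}^{k-1} a_{N-k+i}$ plus $d_k \geqslant 2a_N > a_N$, so $\sum d_i > \sum_{j=N-k+1}^{N} a_j + \text{extra}$, and combining with the Fano inequality $\sum d_i < \sum a_j$ forces $\sum_{j=0}^{N-k} a_j$ to exceed a sum of $k-1$ of the remaining weights by enough that there must be at least $k$ indices below $N-k+1$, i.e.\ $N-k+1 \geqslant k+1$.

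\textbf{Main obstacle.} The crux is Step~3, assertion~\ref{2}: translating ``quasi-smooth + well formed + not a linear cone'' into the precise inequalities $d_i > a_{N-k+i}$ requires the combinatorial analysis of which monomials of degree $d_i$ exist, and I would expect to invoke (and carefully cite) the structure theorems from~\cite{IF} and~\cite{PS20} rather than reprove them; the bookkeeping in ordering the $a_j$ and $d_i$ and applying a marriage/counting argument is the delicate part. Everything else (\ref{3}, \ref{3'}, \ref{5}, \ref{6}, \ref{7}, \ref{8}, \ref{1}, \ref{4}) then follows by elementary manipulations from~\ref{2}, \ref{7}, and the first Fano inequality, with the genuinely new input being only the organization of these classical facts into the stated package.
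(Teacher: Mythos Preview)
The paper's proof is far leaner than your outline: it cites \ref{2} from \cite[Lemma~18.14]{IF}, \ref{5} and \ref{6} from \cite[Corollaries~5.3 and~5.11]{Tasin}, \ref{4} from \cite[Theorem~1.3]{CCC}, and \ref{7} from \cite[Lemma~2.15]{PS16}; only \ref{3}, \ref{3'}, \ref{8}, and \ref{1} are actually argued, and for those four your derivations agree with the paper's (your proof of \ref{1} is exactly the one given).

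Your attempt to rederive \ref{5}, \ref{6}, \ref{4} in-house contains a genuine error, not just sloppiness. The key intermediate claim in Step~2, that \ref{7} alone forces $\#\{j:a_j\geqslant 2\}\leqslant k$, is false: take a smooth sextic $X\subset\mathbb{P}(1,1,2,3)$ (a del Pezzo surface of degree~$1$), which satisfies every hypothesis of the theorem and assertion~\ref{7}, yet has two non-unit weights and $k=1$. Condition~\ref{7} bounds, for each fixed $b>1$, how many weights are divisible by $b$; it does \emph{not} produce an injection from the set of non-unit weights into the set of degrees. Consequently your route to \ref{5} (via $s\geqslant N+1-k\geqslant k+1$) fails at the first inequality, and your argument for \ref{4} fails for the same reason: turning the arithmetic consequence $\sum_{j\leqslant N-k}a_j>k$ of \ref{2}, \ref{1} and Fano into $N-k+1\geqslant k+1$ requires $a_0=\cdots=a_{N-k}=1$, which you have not established. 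Your sketch for \ref{6} is separately muddled: the displayed identity collapses to the tautology $s\geqslant s$, and the inequality you then say you ``want'', $\sum_{a_j\geqslant 2}a_j\geqslant\sum d_i$, is written in the wrong direction (in the sextic example $5<6$). The upshot is that \ref{5}, \ref{6}, \ref{4} genuinely rely on the deeper quasismoothness analysis carried out in \cite{Tasin} and \cite{CCC}, not merely on the divisibility count~\ref{7}; the paper is right to treat them as black boxes.
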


\begin{proof}
 We refer to~\cite[Lemma 18.14]{IF} for \ref{2};  we refer to~\mbox{\cite[Corollary 5.3 and 5.11]{Tasin}} for \ref{5} and \ref{6}; assertions \ref{3} and \ref{3'} follow from assertion \ref{5} and from the assumption that $X$ is not an intersection with a linear cone; for \ref{4} we refer to~\mbox{\cite[Theorem 1.3]{CCC}}; for \ref{7} we refer to~\mbox{\cite[Lemma 2.15]{PS16}} and \ref{8} follows from \ref{7}.

Assertion \ref{1} can be proved using \ref{3'} and \ref{7}. Indeed, if all $a_j\,=\,1,$ then by~\ref{3'} we have 
$$
d_k \geqslant 2=2a_N.
$$
\noindent If $a_N \neq 1,$ then according to \ref{7} there is $d_t$ such that $d_t=m a_N$ for some integer~\mbox{$m>1.$} Thus we obtain 
$$
d_k \geqslant d_t=m a_N \geqslant 2 a_N.
$$  
 
\end{proof}

\section{Chern characters}\label{section:cherncharacters}

Let $X \subset \mathbb{P}=\mathbb{P}(a_0,\ldots ,a_N)$ be a smooth well formed weighted complete intersection of multidegree $(d_1, \ldots, d_k).$ It is well known (for example, \mbox{see~\cite[Section 6]{l-Fano}}) that the Chern characters can be represented in a simple way as
\begin{equation}\label{eq:chern}
\text{ch}_l(X)\,=\,\frac{1}{l!}(\sum_{j=0}^N a_j^l-\sum_{i=1}^k d_i^l)H^l\vert_X,
\end{equation} 

\noindent where $H$ is a generator of class group $\text{Cl}(\mathbb{P}).$ This follows immediately from the exact sequence
\begin{equation}
0 \to T_X \to T_{\mathbb{P}}\vert_X \to \bigoplus_{j=1}^k \mathcal{O}_{\mathbb{P}}(d_j)\vert_X \to 0,\label{exactsequence}
\end{equation}

\noindent where $T_{\mathbb{P}}=\Omega_{\mathbb{P}}^\vee$ is the dual sheaf of the sheaf of K\"{a}hler differentials.  Let us remark that (\ref{exactsequence}) is well defined, because we assume that $X$ is well formed and thus by~\mbox{\cite[Proposition 2.11]{PS16}} it does not pass through the singular locus of $\mathbb{P}.$ So we have the following exact sequence (for example, see~\cite[Theorem 8.1.6]{Cox}) 
$$
0 \to \mathcal{O}_{\mathbb{P}} \to \bigoplus_{j=0}^N \mathcal{O}_{\mathbb{P}}(a_j) \to T_{\mathbb{P}} \to 0, \notag
$$

\noindent which gives us the Chern characters of $T_{\mathbb{P}}.$

We will also prove the following theorem for the Chern characters:

\begin{Th}\label{monotonic}
Let $X \subset \mathbb{P}(a_0,\ldots ,a_N)$ be a smooth well formed Fano weighted complete intersection which is not an intersection with a linear cone. 
Then~$X$ is \mbox{$l$-Fano} if and only if~$\emph{\text{ch}}_l(X)$ is positive. 
\end{Th}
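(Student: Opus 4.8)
The plan is to reduce the statement to a purely arithmetic assertion using formula~(\ref{eq:chern}): since $X$ is a smooth well formed weighted complete intersection which is not an intersection with a linear cone, the restriction $H^l|_X$ is a positive multiple of an effective class (indeed $H|_X$ is ample because $X$ is not an intersection with a linear cone, so $H^l|_X \cdot Z > 0$ for every effective $l$-cycle $Z$), and therefore $\text{ch}_l(X)$ is positive if and only if the integer $\delta_l := \sum_{j=0}^N a_j^l - \sum_{i=1}^k d_i^l$ is strictly positive. In one direction there is nothing to do: if $X$ is $l$-Fano then by definition $\text{ch}_l(X)$ is positive. So the content is the converse, and by the Remark after Theorem~\ref{log_3} (the characterization via inequalities~(\ref{fanoinequalities})) this amounts to the following: if $\delta_l > 0$, then $\delta_m > 0$ for all $1 \leqslant m \leqslant l$. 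In other words, I must prove a monotonicity statement for the power sums $\delta_m$ of the weights against the degrees.

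First I would order the weights and degrees decreasingly as in Theorem~\ref{conditions}, $a_0 \leqslant \ldots \leqslant a_N$ and $d_1 \leqslant \ldots \leqslant d_k$, and invoke the structural constraints proved there. The key inputs are: assertion~\ref{2}, $d_i > a_{N-k+i}$ for $1 \leqslant i \leqslant k-1$; assertion~\ref{1}, $d_k \geqslant 2a_N$; assertion~\ref{5}, that there are at least $k+1$ weights equal to $1$; and assertion~\ref{4}, $N \geqslant 2k$. The rough idea is to pair up each degree $d_i$ ($i \leqslant k-1$) with the weight $a_{N-k+i}$, and $d_k$ with $a_N$, and to exhibit enough "leftover" weights equal to $1$ to absorb the discrepancy. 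Concretely, write $\delta_m = \sum_{i=1}^{k-1}(a_{N-k+i}^m - d_i^m) + (a_N^m - d_k^m) + \sum_{j=0}^{N-k}a_j^m$. The first two groups of terms are each negative, but the crucial observation is that the function $t \mapsto d_i^m - a_{N-k+i}^m$ (for fixed integers $d_i > a_{N-k+i} \geqslant 1$) is increasing in $m$, and similarly $d_k^m - a_N^m$ is increasing in $m$ because $d_k \geqslant 2a_N$. Hence the total "deficit" $\sum_{i=1}^{k-1}(d_i^m - a_{N-k+i}^m) + (d_k^m - a_N^m)$ is a nondecreasing function of $m$, while the "surplus" $\sum_{j=0}^{N-k} a_j^m$ is bounded below by the number of weights equal to $1$ among $a_0, \ldots, a_{N-k}$, which is a constant independent of $m$. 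This is not quite enough on its own, so the real work is in bookkeeping: I would show that the surplus from the $a_j = 1$ terms that are not matched, together with the extra slack coming from $d_k \geqslant 2a_N$ (which gives $d_k^m - a_N^m \geqslant (2^m - 1)a_N^m$, growing fast), dominates, and deduce that $\delta_l > 0 \Rightarrow \delta_m > 0$ for $m < l$. The cleanest route is probably to prove the contrapositive stepwise: assume $\delta_m \leqslant 0$ for some $m < l$ and derive $\delta_{m+1} \leqslant 0$, or even better $\delta_{m+1} < \delta_m \cdot(\text{something} \geqslant 1)$ up to sign — i.e. show $\delta_{m+1} \leqslant c\,\delta_m$ is impossible, by comparing $\delta_{m+1}$ and $2\delta_m$ or similar and checking that $\delta_m \leqslant 0$ forces $\delta_{m+1} \leqslant 0$.

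Making this precise, the monotonicity I want is: $\delta_m \leqslant 0 \implies \delta_{m+1} \leqslant 0$. Equivalently $\sum a_j^{m+1} - \sum d_i^{m+1} \leqslant 0$ whenever $\sum a_j^{m} - \sum d_i^{m} \leqslant 0$. The inequality to establish is of the shape $\sum_j a_j^{m+1} \leqslant (\max_i d_i)\sum_j a_j^{m}$ is too crude; instead I would split indices by whether $a_j = 1$ or $a_j \geqslant 2$, use assertion~\ref{7} (divisibility: every prime dividing some $a_j$ divides at least as many $d_i$) to match each "large" weight $a_j$ to a degree $d_i$ that is a proper multiple of it, hence $\geqslant 2a_j$, and then the inequality $a_j^{m+1} \leqslant \tfrac12 d_i^{m} a_j$ type bounds do the matching for the large part. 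For the part where $a_j = 1$ one uses assertion~\ref{5} and~\ref{3'} ($d_i \geqslant 2$) so that those $1$'s are comfortably dominated by the growth of any $d_i^m$. Packaging all the term-by-term comparisons and summing gives $\delta_{m+1} \leqslant \lambda \cdot \delta_m$ for some $\lambda > 0$ when $\delta_m \leqslant 0$ — wait, that has the wrong sign; more carefully, when $\delta_m \le 0$ one shows each deficit term grows at least as fast as each surplus term, yielding $\delta_{m+1}\le 0$. I anticipate the main obstacle is precisely this matching/bookkeeping: the weights that are $1$ are not individually matched to degrees by assertion~\ref{2} (which only matches $k$ of the $N+1$ weights), so one has $N+1-k \geqslant k+1$ unmatched weights contributing positively, and one must verify that the fast ($\geqslant 2^m$) growth of the matched-degree deficits eventually — in fact already at step $m \to m+1$ — outpaces the fixed contribution of these $1$'s. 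I would handle this by carefully using assertion~\ref{1} to extract a factor $(2^{m+1} - 2^m) a_N^m \geqslant 2^m$ worth of growth in the $d_k$ term, and comparing it against $N + 1 - k$, reducing to a small finite check or an easy induction. Once the arithmetic monotonicity $\delta_m \leqslant 0 \Rightarrow \delta_{m+1} \leqslant 0$ is in hand, its contrapositive gives $\delta_{l} > 0 \Rightarrow \delta_{m} > 0$ for all $m \leqslant l$, which by~(\ref{fanoinequalities}) says exactly that positivity of $\text{ch}_l(X)$ implies $X$ is $l$-Fano, completing the proof of Theorem~\ref{monotonic}.
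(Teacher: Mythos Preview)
Your reduction is correct and matches the paper: via~(\ref{eq:chern}) and the ampleness of $H|_X$, the problem becomes purely arithmetic, and the content is to show that $\delta_l>0$ forces $\delta_m>0$ for all $1\le m\le l$. From there, however, your plan has a genuine gap.

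The pairing $d_i\leftrightarrow a_{N-k+i}$ via assertion~\ref{2} leaves the weights $a_0,\dots,a_{N-k}$ unmatched, and not all of these are equal to~$1$: only $a_0,\dots,a_{s-1}$ are, and one can perfectly well have $s\le N-k$, so the ``surplus'' $\sum_{j=0}^{N-k}a_j^m$ need not be constant in~$m$. Your proposed fix --- rematch each large weight to a degree using assertion~\ref{7} --- does not work as stated: assertion~\ref{7} is a counting statement for each fixed divisor $b$, not a one-to-one matching of weights to degrees, and in any case the number $N-s+1$ of weights exceeding~$1$ can be strictly larger than~$k$, so no injective matching exists in general. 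The heuristic ``$(2^{m+1}-2^m)a_N^m$ absorbs the growth'' is therefore not justified, and your sketch never closes this case.

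The paper avoids matching altogether. It proves the unconditional strict inequality $\delta_m>\delta_{m+1}$ for every real $m\ge 1$ (equation~(\ref{eq:cherninequality})), which is stronger than the sign-propagation you aim for. The proof (Lemma~\ref{f'(l)>0}) runs an algorithm: repeatedly replace the pair $(a_s,a_N)$ by $(a_s-c,\,a_N+c)$, cancelling against $d_k$ when possible. Each such step makes $f(l)=-\delta_l$ grow more slowly by the elementary convexity Lemma~\ref{LemmaAT} (for $a\ge b\ge c>0$, the function $(a+c)^l+(b-c)^l-a^l-b^l$ is increasing in~$l$), and the algorithm terminates in a configuration where all remaining $a_j$ are~$1$, for which monotonicity is immediate. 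The one structural input you are missing, and which makes the algorithm terminate correctly, is assertion~\ref{6}: it gives $\sum_{j\ge s}a_j\le\sum_i d_i$, i.e.\ the large weights are dominated in total by the degrees, which is exactly what lets the iterative cancellation exhaust them.
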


\begin{proof}
We should prove that if $\text{ch}_l(X)$ is positive, then all $\text{ch}_m(X)$ for \mbox{$2 \leqslant m \leqslant l-1$} are positive as well. So we have to prove that $\text{ch}_m(X) \cdot Z >0$ for every effective \mbox{$m$-cycle~$Z$} and all $2 \leqslant m \leqslant l-1.$ But by (\ref{eq:chern}) we know that $\text{ch}_m(X)$ is a multiple of $H^m\vert_X$ which is a power of an ample divisor. Thus, we have that $H^m\vert_X \cdot Z >0$ for every effective $m$-cycle $Z.$ So we have to prove that the coefficients at $H^m\vert_X$ in the expression for $\text{ch}_m(X)$ (see (\ref{eq:chern})) are positive for $m \leqslant l.$ For this we will prove that
\begin{equation}\label{eq:cherninequality}
\sum_{j=0}^N a_j^m-\sum_{i=1}^k d_i^m>\sum_{j=0}^N a_j^{m+1}-\sum_{i=1}^k d_i^{m+1}
\end{equation}

\noindent for all integers $m \geqslant 1.$ 

From Theorem~\ref{conditions}\ref{6} and Fano condition we get that if 
$$
s\,=\,\#\{j \mid a_j\,=\,1\},
$$
\noindent then 
$$
\sum_{j=s}^N a_j\, \leqslant \, \sum_{i=1}^k d_i \,<\,\sum_{j=s}^N a_j+s.
$$

\noindent As all assumptions of Lemma \ref{f'(l)>0} hold in the theorem, thus we get that inequality~(\ref{eq:cherninequality}) is true.

\end{proof}

It is not true in general that if the $l$-th Chern character of smooth Fano variety is positive, then the ($l-1$)-th one is positive as well. Let us consider the following example.

\begin{Example}
Let $X$ be a blow-up of a point on the projective space $\mathbb{P}^n.$ Its Picard group is~\mbox{$\text{Pic}(X)=\mathbb{Z} H \oplus \mathbb{Z}E,$} where $H$ is the pullback of the hyperplane class in~$\mathbb{P}^n,$ and~$E$ is an exceptional divisor. We have~\mbox{$X \simeq \mathbb{P}(\mathcal{O}_{\mathbb{P}^{n-1}} \oplus \mathcal{O}_{\mathbb{P}^{n-1}}(-1))$} and there is a natural \mbox{$\mathbb{P}^1$-fibration} $\pi: X \to \mathbb{P}^{n-1}.$ If $H'$ is a hyperplane class in~$\mathbb{P}^{n-1},$ then~\mbox{$\pi^*(H')=H-E.$} 

According to~\cite[Lemma 2.3]{Coskun} the cone of effective $k$-cycles of $X$ for $1 \leqslant k<n$ is generated by
\begin{itemize}
\item $X_k=H^{n-k}-(-1)^{n-k+1}E^{n-k},$ 
\item $Y_k=(-1)^{n-k+1}E^{n-k}.$
\end{itemize}

\noindent Also note that the following relations on $H$ and $E$ hold:
\begin{equation}\label{intersectionHE}
H \cdot E=0, \;\; H^n=1, \;\; E^n=(-1)^{n+1}. 
\end{equation}

\noindent We have the exact sequence

$$
0 \to L \to T_X \stackrel{d\pi}{\longrightarrow} \pi^*T_{\mathbb{P}^{n-1}} \to 0,
$$
\noindent where $L \simeq K_{X}^{-1} \otimes \pi^*K_{\mathbb{P}^{n-1}}$ is an invertible sheaf on $X.$ So we get that the Chern character of $X$ is
$$
\text{ch}(X)=\text{ch}(K_X^{-1})\text{ch}(\pi^*K_{\mathbb{P}^{n-1}})+\text{ch}(\pi^*T_{\mathbb{P}^{n-1}}).
$$

\noindent One we can rewrite it as
$$
\text{ch}(X)=\exp(H+E)+n\exp(H-E)-1.
$$

\noindent By (\ref{intersectionHE}) we get
$$
\text{ch}_k(X)=\frac{(H+E)^k}{k!}+n\frac{(H-E)^k}{k!}=\frac{1}{k!}((n+1)H^k+(-1)^k(n+(-1)^k)E^k).
$$

\noindent So we can calculate the intersection of $\text{ch}_k$ with effective $k$-cycles for $k<n$:
$$
\text{ch}_k(X) \cdot X_k=\frac{1}{k!}\left((n+1)-(n+(-1)^k)\right)
$$
\noindent and
$$
\text{ch}_k(X) \cdot Y_k=\frac{1}{k!}\left(n+(-1)^k\right).
$$

\noindent For $k=n$ we obtain
$$
\text{ch}_n(X)=\frac{1}{n!}\left(1-(-1)^n\right).
$$

\noindent Thus  we get that $\text{ch}_k(X)$ is positive for odd $k$ and nef but not positive for even $k.$ In particular, $X$ is not a $2$-Fano variety.
\end{Example}

\begin{Remark}
There are other examples of varieties with the same behavior of the Chern characters (see, for example,~\mbox{\cite[Example 4.1]{Sato19}}). The is a conjecture which states that all smooth toric varieties except $\mathbb{P}^n$ are not  $2$-Fano. See, for example, the paper~\cite{Sato19}. However, if we consider $\mathbb{Q}$-factorial terminal varieties, then there is an example of a toric $2$-Fano variety which is not isomorphic to $\mathbb{P}^n$ (see~\cite{Sato18}). 
\end{Remark}

\section{Proof of Theorem \ref{log_2}}\label{prooflog_2}

 Without loss of generality we can assume that
$$
d_1 \leqslant d_{1} \leqslant \ldots  \leqslant d_k \qquad \text{and} \qquad a_0 \leqslant a_{1} \leqslant \ldots  \leqslant a_N.
$$
\noindent Also we can assume that $X$ is a Fano variety, because otherwise there is nothing to prove. It is enough to prove that for $l = \lceil \log_2(N-k+2) \rceil $ \mbox{the $l$-th Chern} character is not positive, i.e.
$$
\sum_{i=1}^k d_i^l \geqslant \sum_{j=0}^N a_j^l.
$$

\noindent \comm{By monotonicity property}From Lemma~\ref{f'(l)>0} it is enough to prove this for~\mbox{$l =\log_2(N-k+2)$}.  Assuming that~\mbox{$\sum_{i=1}^k d_i^l < \sum_{j=0}^N a_j^l.$} Taking into account assertions \ref{5}, \ref{3'} and~\ref{1} of Theorem~\ref{conditions} and the fact that $a_N \geqslant a_j$ for $k+1 \leqslant j \leqslant N,$ we get

$$
k-1+2^la_N^l \,<\, \sum_{i=1}^k d_i^l \,<\, \sum_{j=0}^N a_j^l \leqslant k+1+a_N^l(N-k).
$$

\noindent We obtain

$$
k-1+(N-k+2)a_N^l\,<\,k+1+a_N^l(N-k),
$$

\noindent so we get $a_N^l<1,$ which is impossible.

\section{Proof of Theorem \ref{log_3}}\label{prooflog_3}

Let us prove some preliminary lemmas.

\begin{Lemma}\label{lemmaaboutconic}
Let $X$ be a smooth well formed Fano weighted complete intersection of codimension $k$ in $\mathbb{P}(a_0, \ldots, a_N),$ which is not an intersection with a linear cone. Assume that~\mbox{$\lceil \log_3(N-k+2) \rceil=1$}. Then $N=2,$ $k=1$ and $X$ is a conic in~\mbox{$\mathbb{P}(1,1,1)=\mathbb{P}^2.$}
\end{Lemma}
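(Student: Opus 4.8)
The plan is to turn the hypothesis into numerical constraints on $N$ and $k$, then pin down the weights and the degree using Theorem~\ref{conditions}.

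First I would unwind the condition $\lceil \log_3(N-k+2)\rceil = 1$. Since $N-k+2$ is a positive integer, this is equivalent to $1 < N-k+2 \leqslant 3$, i.e. $N-k \in \{0,1\}$. On the other hand, Theorem~\ref{conditions}\ref{4} gives $N \geqslant 2k$, hence $N-k \geqslant k \geqslant 1$. Combining these forces $k=1$ and $N-k=1$, that is, $N=2$. Thus $X \subset \mathbb{P}(a_0,a_1,a_2)$ is a hypersurface of some degree $d_1$, and after reordering we may assume $a_0 \leqslant a_1 \leqslant a_2$.

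Next I would show that all three weights equal $1$. By Theorem~\ref{conditions}\ref{5} at least $k+1 = 2$ of the weights are equal to $1$, so $a_0 = a_1 = 1$. To rule out $a_2 > 1$, combine the Fano condition $a_0+a_1+a_2 > d_1$, i.e. $a_2 + 2 > d_1$, with Theorem~\ref{conditions}\ref{1}, which here reads $d_1 \geqslant 2a_2$. Then $2a_2 \leqslant d_1 < a_2 + 2$, whence $a_2 < 2$ and therefore $a_2 = 1$. Hence $\mathbb{P}(a_0,a_1,a_2) = \mathbb{P}(1,1,1) = \mathbb{P}^2$, and the same two inequalities now give $2 \leqslant d_1 < 3$, so $d_1 = 2$ and $X$ is a conic in $\mathbb{P}^2$, as claimed.

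I do not expect a genuine obstacle here: once Theorem~\ref{conditions} is in hand, every step is an elementary inequality manipulation. The two points that need a little care are (i) in the reduction step one must use both $N-k \leqslant 1$ and the bound $N \geqslant 2k$ of Theorem~\ref{conditions}\ref{4} simultaneously in order to kill the codimension, and (ii) in the weights step one should invoke part~\ref{1} (giving the clean $d_1 \geqslant 2a_2$) rather than the divisibility statements \ref{7}--\ref{8}, so that the comparison with the Fano inequality $d_1 < a_2 + 2$ immediately forces $a_2 = 1$.
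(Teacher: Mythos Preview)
Your proof is correct and follows essentially the same route as the paper's: both translate the ceiling condition into $N-k\leqslant 1$, invoke Theorem~\ref{conditions}\ref{4} to force $N=2$, $k=1$, use \ref{5} to get $a_0=a_1=1$, and then play the Fano inequality $d_1<a_2+2$ against \ref{1} ($d_1\geqslant 2a_2$) to obtain $a_2=1$ and $d_1=2$. Your derivation $N-k\geqslant k\geqslant 1$ from $N\geqslant 2k$ is marginally cleaner than the paper's two-step elimination of $N-k=0$ followed by pinning down $k$, but the argument is otherwise identical.
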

\begin{proof}
As $\lceil \log_3(N-k+2) \rceil=1,$ we have that $N-k+2 \leqslant 3,$ or, equivalently,~\mbox{$N-k \leqslant 1.$} The case when $N-k=0$ is impossible by Theorem \ref{conditions}\ref{4}. Thus, we have $N-k=1.$ According to assertions \ref{5} and~\ref{4} of Theorem~\ref{conditions} this holds if and only if $N=2,$ $k=1$ and $s \geqslant 2.$ But the only smooth Fano variety in this case should be a curve $X \subset \mathbb{P}(1,1,a_2)$ of degree $d$ such that $2+a_2>d.$ However, by Theorem~\ref{conditions}\ref{1} we have $2+a_2>d \geqslant 2a_2.$ Thus, we get $a_2=1$ and~$d=2,$ i.e. $X$ is a conic in~\mbox{$\mathbb{P}^2.$} 

\end{proof}

\begin{Lemma}\label{lemmaalla=1}
Let $X \subset \mathbb{P}^N$ be a smooth Fano complete intersection of codimension~$k$ which does not lie in a hyperplane. Assume that $X$ is not an intersection of quadrics. Then for~\mbox{$l=\log_3(N-k+2)$} the inequality 
\begin{equation}\label{eg:lemmaalla=1maininequality}
\sum_{i=1}^k d_i^l \geqslant N+1
\end{equation}
\noindent holds true.
\end{Lemma}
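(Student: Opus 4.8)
The statement to prove is Lemma~\ref{lemmaalla=1}: for a smooth Fano complete intersection $X\subset\mathbb P^N$ of codimension $k$, not contained in a hyperplane, and not an intersection of quadrics, the inequality $\sum_{i=1}^k d_i^l\geqslant N+1$ holds for $l=\log_3(N-k+2)$. The natural strategy is to minimize the left-hand side $\sum d_i^l$ subject to the arithmetic constraints coming from Theorem~\ref{conditions}, and show that even the minimum is at least $N+1$. Since $X$ is not an intersection of quadrics, at least one $d_i\geqslant 3$; combined with assertion \ref{3'} (no $d_i=1$) and \ref{2}/\ref{1}, the degrees are forced to be fairly large. I would first fix the ordering $d_1\leqslant\dots\leqslant d_k$ and split into two cases according to whether all $d_i\geqslant 3$ or some $d_i=2$.

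**Step 1: the all-threes lower bound.** If every $d_i\geqslant 3$, then $\sum_{i=1}^k d_i^l\geqslant k\cdot 3^l=k(N-k+2)$ for $l=\log_3(N-k+2)$. It then suffices to check the elementary inequality $k(N-k+2)\geqslant N+1$ for all $k\geqslant 1$ with $N\geqslant 2k$ (assertion \ref{4}). For $k=1$ this reads $N+1\geqslant N+1$, an equality; for $k\geqslant 2$ one has $k(N-k+2)-(N+1)=(k-1)N-k^2+2k-1=(k-1)(N-k)-(k-1)^2+... $ — I would just bound $N\geqslant 2k$ to get $k(N-k+2)\geqslant k(k+2)\geqslant N+1$ since $N+1\leqslant$ something comparable; this is routine and best relegated to an appendix computation (the paper has appendices~\ref{append} and \ref{appendfortheoremlog_3} for exactly this).

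**Step 2: handling $d_i=2$.** If $d_1=\dots=d_r=2$ and $d_{r+1},\dots,d_k\geqslant 3$ with $r<k$ (we cannot have $r=k$ by hypothesis), then
\begin{equation*}
\sum_{i=1}^k d_i^l\;\geqslant\; r\cdot 2^l+(k-r)\cdot 3^l.
\end{equation*}
Here I would use assertion \ref{2} of Theorem~\ref{conditions}, namely $d_i>a_{N-k+i}\geqslant 1$, to pin down $N$ in terms of $k$ more tightly, and also use that $2^l=2^{\log_3(N-k+2)}=(N-k+2)^{\log_3 2}$. The task reduces to showing $r(N-k+2)^{\log_3 2}+(k-r)(N-k+2)\geqslant N+1$. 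Since $N-k\geqslant k$ (so $N-k+2\geqslant k+2$) and $k-r\geqslant 1$, the second term alone gives $(k-r)(N-k+2)\geqslant N-k+2$, and one must still account for the deficit $N+1-(N-k+2)=k-1$, which is supplied by the first term together with the remaining copies of $3^l$; monotonicity in $N-k$ makes the worst case $N-k$ small, i.e.\ $N-k$ near $k$, which is again a finite elementary check. I expect the main obstacle to be organizing the case analysis cleanly: the exponent $l$ is generally irrational, so one cannot argue purely combinatorially, and one must be careful that the reductions to ``worst cases'' (smallest $N-k$, fewest large degrees) are genuinely monotone in the relevant parameters. Once the inequality is pushed to a statement about $k$, $r$ and $N-k$ with $N-k\geqslant k\geqslant 1$, $1\leqslant r\leqslant k-1$, it becomes a bounded elementary estimate of the type already isolated in the appendices, and I would invoke a lemma there (analogous to Lemma~\ref{f'(l)>0}) to finish.
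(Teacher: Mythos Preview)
Your overall strategy---minimize $\sum d_i^l$ over admissible degree sequences and compare to $N+1$---is the paper's strategy too, but the paper executes it in five lines with no case split, whereas your version contains both a concrete error and an unfinished argument.

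The observation you are missing is that your two cases collapse into one: among all sequences with every $d_i\geqslant 2$ and at least one $d_i\geqslant 3$, the sum $\sum d_i^l$ is minimized precisely at your $r=k-1$ (one cubic, the rest quadrics), since $3^l>2^l$. So the paper simply assumes $\sum d_i^l<N+1$ and writes
\[
(k-1)\,2^l + 3^l \;\leqslant\; \sum_{i=1}^k d_i^l \;<\; N+1,
\]
substitutes $3^l=N-k+2$, and obtains $(k-1)\,2^l<k-1$, which is impossible since $l\geqslant 1$. That is the entire proof; your Step~1 and every value of $r$ in Step~2 are all dominated by this single extremal configuration. Separately, your Step~1 contains an actual mistake: from $N\geqslant 2k$ you propose $k(N-k+2)\geqslant k(k+2)\geqslant N+1$, but the second inequality fails whenever $N$ is large relative to $k$ (e.g.\ $k=2$, $N=100$ gives $8<101$); the correct one-line check is $k(N-k+2)-(N+1)=(k-1)(N-k+1)\geqslant 0$. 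And your Step~2 never reaches a conclusion: the appeal to ``monotonicity in $N-k$'' and to an unspecified appendix lemma is not an argument, whereas the genuine worst case $r=k-1$ reduces to the one-line inequality $(k-1)\,2^l\geqslant k-1$ already displayed above. No appendix machinery is needed here.
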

\begin{proof}
From our assumptions there is $d_i$ such that~\mbox{$d_i>2.$} Suppose that the inequality~(\ref{eg:lemmaalla=1maininequality}) is false, i.e. 
$$
\sum_{i=1}^k d_i^l\,<\,N+1.
$$
\noindent We have 
$$
(k-1)2^l+3^l \leqslant \sum_{i=1}^k d_i^l\,<\,N+1.
$$
\noindent Thus we get
$$
(k-1)2^l+N-k+2\,<\,N+1,
$$
\noindent which is equivalent to $(k-1)\cdot2^l\,<\,k-1,$ but it is impossible, because $l>1$ by Lemma \ref{lemmaaboutconic}.

\end{proof}

Now we are ready to prove Theorem~\ref{log_3}.
\begin{proof}[Proof of Theorem \ref{log_3}] Let $X \subset \mathbb{P}(a_0,a_1,\ldots,a_N)$ be a smooth well formed weighted complete intersection of multidegree $(d_1, \ldots, d_k)$ which is not an intersection with a linear cone. Without loss of generality we can assume that
$$
d_1 \leqslant d_2 \leqslant \ldots  \leqslant d_k \qquad \text{and} \qquad a_0 \leqslant a_{1} \leqslant \ldots  \leqslant a_N.
$$

\noindent We can also assume that $X$ is a Fano variety, because otherwise there is nothing to prove. Also we assume that $X$ is not a complete intersection of quadrics in $\mathbb{P}^N$ by Example~\ref{exquadric}. It is enough to prove that for~\mbox{$l = \lceil \log_3(N-k+2) \rceil$}, which is at least 2 by Lemma \ref{lemmaaboutconic}, the $l$-th Chern character is not positive, i.e.
\begin{equation}\label{eg:thlog_3maininequality}
\sum_{i=1}^k d_i^l \geqslant \sum_{j=0}^N a_j^l.
\end{equation}
Let $s\,=\,\#\{a_j \mid a_j\,=\,1\}.$  In the case~\mbox{$s=N+1$} Theorem~\ref{log_3} holds true by Lemma~\ref{lemmaalla=1}. So assume that $s \leqslant N.$ 

Suppose that $a_N=2.$ Then we have $a_j=2$ for all $s \leqslant j \leqslant N.$ Thus, by Theorem~\ref{conditions}\ref{7} we get $k \geqslant N-s+1.$ So by Lemma \ref{lemmaallbut1a=1} the inequality (\ref{eg:thlog_3maininequality}) holds.

 Otherwise, if $a_N \geqslant 3,$ the assumptions of Lemma~\ref{lemmaapB} hold by  
Theorem~\ref{conditions}. 
Thus, the inequality (\ref{eg:thlog_3maininequality}) is implied by this lemma.

\end{proof}

\appendix
\section{Monotonicity and boundedness}\label{append}
In this appendix we prove some elementary lemmas about monotonicity and boundedness of functions which are needed in Theorems~\mbox{\ref{log_3} and \ref{monotonic}}.

\begin{Lemma}\label{LemmaAT}
If $a \geqslant b \geqslant c>0$, then the function $f(l)=a^l+b^l$ grows slower than the function $g(l)=(a+c)^l+(b-c)^l$ for $l \geqslant 1$ in $\mathbb{R};$ in other words, the function $g(l)-f(l)$ in non-increasing for $l \geqslant 1.$
\end{Lemma}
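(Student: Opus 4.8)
The statement to prove is Lemma \ref{LemmaAT}: for $a \geqslant b \geqslant c > 0$, the function $g(l) - f(l) = (a+c)^l + (b-c)^l - a^l - b^l$ is non-increasing for $l \geqslant 1$. The natural approach is to differentiate with respect to $l$ and show the derivative is $\leqslant 0$.

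Computing the derivative, we get
$$
(g-f)'(l) = (a+c)^l \ln(a+c) + (b-c)^l \ln(b-c) - a^l \ln a - b^l \ln b,
$$
where we should first note that if $b - c = 0$ the term $(b-c)^l\ln(b-c)$ is interpreted as its limit $0$, and the argument simplifies; so assume $b > c$ (the edge case $b=c$ can be handled separately or absorbed by continuity). The plan is to rewrite this as a difference of two quantities that I can compare pairwise: set $\varphi(t) = t^l \ln t$ for $t > 0$. Then $(g-f)'(l) = \varphi(a+c) - \varphi(a) - \big(\varphi(b) - \varphi(b-c)\big)$. Since $a+c > a$ and $b > b-c$, each of the two bracketed differences is an increment of $\varphi$ over an interval of the same length $c$; moreover the first interval $[a, a+c]$ lies entirely to the right of the second interval $[b-c, b]$ because $a \geqslant b > b - c$ — more precisely $a \geqslant b$ gives $[b-c,b]$ shifted right by $a - b \geqslant 0$ lands inside... actually I want $[a,a+c]$ to sit to the right of $[b-c,b]$, which holds since $a \geqslant b$. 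So by the mean value theorem, $\varphi(a+c) - \varphi(a) = c\,\varphi'(\xi_1)$ for some $\xi_1 \in (a, a+c)$ and $\varphi(b) - \varphi(b-c) = c\,\varphi'(\xi_2)$ for some $\xi_2 \in (b-c, b)$, with $\xi_1 > \xi_2$. Hence it suffices to show $\varphi'$ is non-increasing on $(0,\infty)$, i.e. $\varphi'' \leqslant 0$, for then $\varphi'(\xi_1) \leqslant \varphi'(\xi_2)$ and $(g-f)'(l) \leqslant 0$.

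So the crux reduces to a one-variable calculus fact: $\varphi(t) = t^l \ln t$ has $\varphi'' \leqslant 0$ on $(0,\infty)$ for every fixed $l \geqslant 1$? This is where I expect the main obstacle, and in fact I suspect it is \emph{false} for large $t$ (for $l \geqslant 1$, $t^l \ln t$ is eventually convex). The fix is to not use the crude MVT bound but to exploit that we only need the comparison at points $\xi_1 > \xi_2$ that arise from our specific configuration, together with the constraint $a \geqslant b \geqslant c$. A cleaner route: write $g(l) - f(l) = \int_b^{a+c} \!\big(\text{something}\big)$ is awkward; instead use the integral representation $(a+c)^l - a^l = l\int_a^{a+c} t^{l-1}\,dt$ and similarly $b^l - (b-c)^l = l\int_{b-c}^b t^{l-1}\,dt$, so that $g(l)-f(l) = l\big(\int_a^{a+c} t^{l-1}dt - \int_{b-c}^b t^{l-1}dt\big) = l\int_0^c \big((a+u)^{l-1} - (b-c+u)^{l-1}\big)du$, and since $a \geqslant b > b-c$ the integrand is $\geqslant 0$ — but I need monotonicity in $l$, not positivity. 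Differentiating this representation in $l$ and showing the result is $\leqslant 0$ should work: $\frac{d}{dl}\big[l(\alpha^l - \beta^l)\big]$ with $\alpha = $ increments, and one reduces to comparing $h(t) = t^{l-1}(1 + l\ln t)$ type expressions. The honest plan is therefore: reduce to showing that for fixed $l\geqslant 1$ the function $\psi(t) := \frac{d}{dl}(t^l) = t^l\ln t$ satisfies the \emph{discrete} concavity-type inequality $\psi(a+c) + \psi(b-c) \leqslant \psi(a) + \psi(b)$ under $a \geqslant b \geqslant c > 0$ — i.e. exactly the original statement with $\psi$ in place of "$l$-th power" — and this is provable by the same MVT/interval-nesting idea \emph{provided} $\psi' $ is monotone on the relevant range, or by a direct two-variable argument fixing $b$ and $c$ and differentiating in $a$. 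I would carry it out by: (i) reducing to $(g-f)'(l) \leqslant 0$; (ii) fixing $l$, $b$, $c$ and viewing $P(a) := (a+c)^l\ln(a+c) - a^l\ln a$ as a function of $a$ on $[b,\infty)$, showing $P$ is non-increasing there by checking $P'(a) \leqslant 0$, which is again an increment of $\varphi' = (t^l\ln t)' = t^{l-1}(l\ln t + 1)$ over $[a,a+c]$ and reduces to sign analysis of $\varphi''$ on $[b, \infty)$ — and here one uses that the problem's hypotheses force the relevant $t$ to be bounded, or one simply checks the needed inequality $\varphi'(a+c) \leqslant \varphi'(a)$ may fail but $P(a) \leqslant P(b)$ still holds by a global estimate. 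I expect the write-up to require care precisely at this point; the rest is routine differentiation.
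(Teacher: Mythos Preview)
You are attempting to prove the wrong inequality. The phrase ``in other words, the function $g(l)-f(l)$ is non-increasing'' in the lemma is a typo; the first clause (``$f$ grows slower than $g$'') means $g-f$ is \emph{non-decreasing}, and that is what the paper actually proves and uses (see the proof of Lemma~\ref{f'(l)>0}, where the lemma is applied to conclude that $-(a_N+c)^l-(a_s-c)^l$ grows slower than $-a_N^l-a_s^l$). A quick sanity check: for $a=b=c=1$ one gets $g(l)-f(l)=2^l-2$, which is increasing, not non-increasing. This is precisely why every avenue you tried kept failing: you needed concavity of $\varphi(t)=t^l\ln t$, correctly suspected it was false, and then could not rescue the argument---because the target inequality itself is false.

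If you flip the sign and try your approach for the correct direction, you would need $\varphi(a+c)-\varphi(a)\geqslant \varphi(b)-\varphi(b-c)$, which via the mean value theorem would follow from convexity of $\varphi$. But $\varphi''(t)=t^{l-2}\bigl(l(l-1)\ln t+(2l-1)\bigr)$ is negative for $t$ small when $l>1$, so the convexity argument still breaks down when $b-c$ is close to~$0$. The paper sidesteps this by first treating the boundary case $b=c$ directly, and for $b>c$ factoring out $(b-c)^l$ to write
\[
h(l)=(b-c)^l\bigl((y+z-1)^l-y^l-z^l+1\bigr),\qquad y=\tfrac{a}{b-c},\ z=\tfrac{b}{b-c},
\]
so that $y+z-1>y\geqslant z>1$; all bases now exceed $1$, and the required monotonicity of the bracketed factor is then obtained by differentiating in $l$ and checking that the derivative is positive by a further monotonicity argument in $z$. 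Your MVT/convexity idea can be made to work once you rescale in this way (since $\varphi$ is convex on $[1,\infty)$), but without the rescaling it does not go through.
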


\begin{proof}
We have 
\begin{equation}
h(l)=g(l)-f(l)=(a+c)^l+(b-c)^l-a^l-b^l.
\end{equation}
\noindent If $b=c,$ then we are done, because 
$$
h(l)=(a+b)^l-a^l-b^l=b^l\left( \left(\frac{a+b}{b}\right)^l-\left(\frac{a}{b}\right)^l-1\right)
$$
\noindent which is monotonically increasing, as a product of monotonically increasing non-negative functions. So assume that $b>c.$ We have
\begin{equation}
h(l)=(b-c)^l\left( \left(\frac{a+c}{b-c}\right)^l-\left(\frac{a}{b-c}\right)^l-\left(\frac{b}{b-c}\right)^l+1 \right).
\end{equation}

\noindent Put $y=\frac{a}{b-c}$ and $z=\frac{b}{b-c}.$  So we can rewrite $h(l)$ as
$$
h(l)=(b-c)^l\left( (y+z-1)^l-y^l-z^l+1\right)=(b-c)^lt(l,y,z).
$$

\noindent The function $(b-c)^l$ is monotonically increasing. So it is enough to prove that~$t(l,y,z)$ is monotonically increasing with respect to $l.$ Let us note that 
$$
y+z-1>y \geqslant z>1
$$
\noindent by our conditions. We have 
\begin{equation}
s(l,y,z)=\frac{\partial t(l,y,z)}{\partial l}=\ln(y+z-1)\cdot(y+z-1)^l-\ln(y)\cdot y^l-\ln(z)\cdot z^l.
\end{equation}

 For the sake of proving that $t(l,y,z)$ is monotonically increasing, we need to show that $s(l,y,z)$ is monotonically increasing with respect to $z$ and get that
$$
s(l,y,z)>s(l,y,1)= 0.
$$
\noindent   Indeed,
$$
\frac{\partial s(l,y,z)}{\partial z}=(y+z-1)^{l-1}-z^{l-1}+l\ln(y+z-1)\cdot(y+z-1)^{l-1}-l\ln(z)\cdot z^{l-1}>0.
$$
\noindent Thus, $h(l)$ is monotonically increasing.

\end{proof}

\begin{Lemma}\label{lemmaAT2}
Let $1 \leqslant d \leqslant a \leqslant b$ be real numbers. Then we have 
$$
a+b \leqslant \frac{a}{d}+bd.
$$
\end{Lemma}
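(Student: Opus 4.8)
\textbf{Proof proposal for Lemma~\ref{lemmaAT2}.} The plan is to reduce the two-variable-looking inequality to a single elementary factorization. Starting from the desired inequality $a+b \leqslant \frac{a}{d} + bd$, I would bring everything to one side and clear the denominator $d>0$, so that the claim becomes equivalent to
$$
0 \leqslant \frac{a}{d} + bd - a - b = \frac{1}{d}\bigl(a + bd^2 - ad - bd\bigr) = \frac{1}{d}\bigl(a(1-d) + bd(d-1)\bigr) = \frac{(d-1)(bd - a)}{d}.
$$
Thus it suffices to check that the numerator $(d-1)(bd-a)$ is non-negative. Here the hypotheses do all the work: $d \geqslant 1$ gives $d-1 \geqslant 0$, and since $b \geqslant a$ and $d \geqslant 1$ we have $bd \geqslant a\cdot 1 = a$, so $bd - a \geqslant 0$; the factor $\frac{1}{d}$ is positive because $d \geqslant 1 > 0$. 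Hence the product is $\geqslant 0$, which is exactly the inequality we want.

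There is essentially no obstacle here: the only mild subtlety is making sure the chain of equivalences genuinely uses $d>0$ (so that multiplying through by $d$ preserves the inequality direction), and that the estimate $bd \geqslant a$ uses $b \geqslant a$ together with $d \geqslant 1$ rather than the unused bound $a \geqslant d$. I would present it as the short displayed computation above followed by the one-line sign analysis of $(d-1)(bd-a)/d$, and conclude.
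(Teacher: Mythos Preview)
Your proof is correct and follows essentially the same route as the paper: both compute $\frac{a}{d}+bd-a-b$ and factor it as a product with the sign-controlling factors $d-1$ and $bd-a$ (the paper writes it as $\frac{a}{d}(d-1)\bigl(\frac{bd}{a}-1\bigr)$, which is the same expression). Your observation that the hypothesis $d\leqslant a$ is not actually needed is also accurate.
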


\begin{proof}
Indeed, we have 
$$
bd+\frac{a}{d}-a-b=\frac{a}{d}\left(\frac{bd^2}{a}+1-d-\frac{bd}{a} \right)=\frac{a}{d}\left( d-1 \right) \left( \frac{bd}{a}-1 \right) \geqslant 0.
$$

\end{proof}

\begin{Lemma}\label{4^l>2^l+3^l}
Let $f(l)\,=\,4^l-2^l-3^l,$ where $l \in \mathbb{R}.$ Then $f(l)\,>\,0$ for $l \geqslant 2.$ 
\end{Lemma}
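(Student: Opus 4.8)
The plan is to reduce the inequality to a monotonicity statement by homogenizing. Dividing the desired inequality $4^l > 2^l + 3^l$ by $4^l > 0$, one sees that it is equivalent to
$$
\left(\frac{1}{2}\right)^l + \left(\frac{3}{4}\right)^l < 1 .
$$
So it suffices to prove that the function $g(l) = (1/2)^l + (3/4)^l$ satisfies $g(l) < 1$ for all real $l \geqslant 2$.

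First I would observe that each summand $(1/2)^l$ and $(3/4)^l$ is a strictly decreasing function of the real variable $l$, since both bases lie in the open interval $(0,1)$; hence $g$ is strictly decreasing on $\mathbb{R}$. Consequently, for every $l \geqslant 2$ we have $g(l) \leqslant g(2)$, and it remains only to evaluate
$$
g(2) = \frac{1}{4} + \frac{9}{16} = \frac{13}{16} < 1 .
$$
This yields $g(l) < 1$ for all $l \geqslant 2$, which is exactly the assertion $f(l) > 0$; in fact one gets the sharper statement that $f(l)/4^l$ is strictly increasing, with minimal value $3/16$ attained at $l = 2$.

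There is essentially no obstacle here: the only ingredients are that an exponential with base in $(0,1)$ is decreasing and the arithmetic check at $l = 2$. As an alternative one could start from $f(2) = 16 - 4 - 9 = 3 > 0$ and estimate $f'(l) = \ln 4\cdot 4^l - \ln 2 \cdot 2^l - \ln 3 \cdot 3^l$ from below, but the homogenization argument above is cleaner and avoids any calculus beyond the monotonicity of exponential functions.
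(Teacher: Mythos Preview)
Your proof is correct and in fact slightly cleaner than the paper's. Both arguments follow the same overall strategy---homogenize by dividing through by a power, reduce to a monotonicity statement, and check the boundary value at $l=2$---but the choice of divisor differs. The paper divides by $2^l$, obtaining $f(l)/2^l = 2^l - 1 - (3/2)^l$, and then must compute the derivative and bound it via $\ln 2 \cdot 2^l - \ln(3/2)\cdot(3/2)^l > \ln 2\,(2^l - (3/2)^l) > 0$ to establish monotonicity. By dividing instead by the largest term $4^l$, you arrange that both remaining bases lie in $(0,1)$, so monotonicity is immediate without any derivative estimate. Your route is therefore more elementary; the paper's route, on the other hand, is closer in flavor to the alternative you sketch at the end (working with $f'$), and gives that $f(l)/2^l$ itself is increasing, which is a marginally different piece of information but not one used elsewhere.
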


\begin{proof}
Indeed, the function $\frac{f(l)}{2^l}$ is monotonically increasing, because
$$
\frac{\partial}{\partial l}\left(\frac{f(l)}{2^l}\right)=2^l \ln2-\left(\frac{3}{2}\right)^l \ln\left(\frac{3}{2}\right)>\ln2 \left(2^l-\left(\frac{3}{2}\right)^l\right)>0.
$$

\noindent Moreover, we have $f(2)=3>0.$ Thus, $f(l)$ is positive for all $l \geqslant 2.$

\end{proof}

Now we prove the lemma that we use in Theorem \ref{monotonic}.

\begin{Lemma}\label{f'(l)>0}
Let  $N,k,d_1,\ldots ,d_k,a_0,a_{1},\ldots,a_N$  be positive integers such that
\begin{gather*}
d_1 \leqslant d_{2} \leqslant \ldots \leqslant d_{k-1}  \leqslant d_{k}; \\
 1=a_0=a_1= \ldots a_{s-1}<a_s  \leqslant \ldots \leqslant a_{N-1} \leqslant a_N.
\end{gather*}

\noindent  Suppose the following conditions are satisfied:
\begin{gather}
k \leqslant N+1;\\
d_i \geqslant 2 \quad \text{for all} \;\;  1 \leqslant i \leqslant k;\\  
d_i > a_{N-k+i};\label{lemmad>a}\\
s=\#\{a_j\mid a_j=1\} \geqslant 1;\\
\sum_{j=s}^N a_j\, \leqslant \, \sum_{i=1}^k d_i \,<\,\sum_{j=s}^N a_j+s.\label{lemmasad}
\end{gather}

\noindent Let  
$$
f(l)\,=\,\sum_{i=1}^k d_i^l-\sum_{j=0}^N a_j^l,
$$

\noindent where $l \in \mathbb{R}.$ Then $f(l)$  is monotonically increasing for $l \geqslant 1.$ 
\end{Lemma}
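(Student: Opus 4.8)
The plan is to show that $f'(l) > 0$ for all $l \geqslant 1$ by peeling off the ``problematic'' part of the sums (the degrees $d_i$ that are not matched by large weights) and reducing to Lemma \ref{LemmaAT}. First I would write
$$
f'(l) = \sum_{i=1}^k d_i^l \ln d_i - \sum_{j=s}^N a_j^l \ln a_j,
$$
noting that the $s$ weights equal to $1$ contribute nothing to $f'$ since $\ln 1 = 0$ (they only shift $f$ by the constant $-s$, which is exactly what condition \eqref{lemmasad} is tailored to absorb). So the whole question is about comparing $\sum_{i=1}^k d_i^l \ln d_i$ with $\sum_{j=s}^N a_j^l \ln a_j$. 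By condition \eqref{lemmad>a} each $d_i$ dominates the weight $a_{N-k+i}$, i.e. the top $k$ weights $a_{N-k+1} \leqslant \cdots \leqslant a_N$ are pointwise strictly below $d_1 \leqslant \cdots \leqslant d_k$. This immediately handles the ``tail'' $j \geqslant N-k+1$ of the second sum term-by-term: $d_i^l \ln d_i > a_{N-k+i}^l \ln a_{N-k+i}$ because $x \mapsto x^l \ln x$ is increasing for $x \geqslant 1$ and $l \geqslant 1$. What remains is to account for the middle weights $a_s, \ldots, a_{N-k}$, which have no partner among the $d_i$.

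The key idea for the middle weights is to use the freedom provided by condition \eqref{lemmasad}, namely $\sum_{j=s}^N a_j \leqslant \sum_{i=1}^k d_i$. Since each $d_i > a_{N-k+i} \geqslant 1$, there is ``room'' in the $d_i$'s beyond what is needed to dominate the top $k$ weights; I would argue that one can redistribute this slack to cover $a_s, \ldots, a_{N-k}$ as well. Concretely, I expect the cleanest route is an induction on the number of unmatched middle weights: if $a_s > 1$ (so there is at least one genuine middle weight), pick the largest $d_k$; since $\sum d_i \geqslant \sum_{j\geqslant s} a_j$ and the top weights are each strictly smaller than their matched $d_i$, one can ``transfer'' an amount $c = a_s$ (or as much of it as possible, splitting $a_s$ if necessary) from $d_k$ to some $a_j$ in a way that keeps the ordering hypotheses and the sum inequality intact, and apply Lemma \ref{LemmaAT} with $(a,b,c)$ chosen as (the relevant $d$, the relevant $a_j$, the transferred amount) to see that the derivative only decreases under this operation while the configuration moves toward the base case where every weight $>1$ is matched. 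Equivalently — and this may be the slicker packaging — one shows directly via Lemma \ref{LemmaAT} (summed over a chain of such transfers, analogous to a majorization/Abel-summation argument) that
$$
\sum_{i=1}^k d_i^l \ln d_i - \sum_{j=s}^N a_j^l \ln a_j
$$
is minimized, subject to the constraints, at a configuration where it is visibly positive, e.g. where each surviving $a_j$ equals some $d_i$ minus a unit and the spare weight is concentrated — at which point positivity follows from $x^l\ln x$ being strictly increasing together with $d_i \geqslant 2$.

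The main obstacle I anticipate is the bookkeeping in the redistribution step: Lemma \ref{LemmaAT} is stated for a \emph{single} transfer between \emph{two} quantities $a \geqslant b \geqslant c > 0$, but here we have $k$ degrees and $N-s+1$ weights, and after a transfer the sorted order of the $d_i$'s or the $a_j$'s may change, so one must be careful that the hypothesis $d_i > a_{N-k+i}$ (or an adequate substitute) is preserved at each stage and that the process terminates. I would organize this by always operating on the current largest $d$ and a current middle weight, and by tracking the quantity $\sum_{j=s}^N a_j \leqslant \sum_{i=1}^k d_i$ as a loop invariant; the inequalities $d_i \geqslant 2$ and $d_i > a_{N-k+i}$ guarantee that after all middle weights are exhausted we are in the fully-matched base case, where term-by-term comparison via monotonicity of $x \mapsto x^l \ln x$ finishes the proof. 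The auxiliary estimates in Lemmas \ref{lemmaAT2} and \ref{4^l>2^l+3^l} are presumably what one invokes to check the small base cases (e.g. $d_k = 2$ versus a middle weight, or the configuration $2,2,3$ against $4$) once the reduction is complete.
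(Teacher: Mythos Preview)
Your overall instinct---reduce via Lemma~\ref{LemmaAT} to a configuration where monotonicity is obvious---is exactly right, but the transfer you propose is pointed the wrong way. Lemma~\ref{LemmaAT} compares $a^l+b^l$ with $(a+c)^l+(b-c)^l$: it is a statement about spreading two terms \emph{with the same sign} in $f(l)$. In your plan you want to apply it with $a=d_k$ and $b=a_j$, but these carry opposite signs in $f(l)=\sum d_i^l-\sum a_j^l$, so the conclusion of Lemma~\ref{LemmaAT} does not translate into ``the derivative only decreases under this operation.'' Moreover, any move that decreases $d_k$ immediately threatens the constraints $d_i>a_{N-k+i}$ and $d_k\geqslant 2a_N$, and you give no mechanism for preserving them; this is precisely the bookkeeping you flag as an obstacle, and it is not resolved.

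The paper avoids this entirely by touching only the $a_j$'s. At each step it takes the pair $(a_N,a_s)$---both entering $f(l)$ with a minus sign---and spreads them apart by $c=\min\{a_s-1,\,d_k-a_N\}$. Lemma~\ref{LemmaAT} then says $-(a_N+c)^l-(a_s-c)^l$ grows slower than $-a_N^l-a_s^l$, so the new $f$ grows slower than the old one; and because the $d_i$'s are never altered, all the hypotheses~\eqref{lemmad>a}--\eqref{lemmasad} persist automatically. Each step either turns $a_s$ into $1$ (so $s$ increases) or pushes $a_N$ up to $d_k$ (so the pair $(d_k,a_N)$ cancels and both $N,k$ drop by one). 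The process terminates at $s=N$ or $s=N+1$, where $f(l)=(d_k^l-a_N^l)+\sum_{i<k}d_i^l-s$ or $f(l)=\sum d_i^l-(N+1)$ is visibly increasing. No differentiation is needed, and Lemmas~\ref{lemmaAT2} and~\ref{4^l>2^l+3^l} play no role here---they are used only in Appendix~\ref{appendfortheoremlog_3} for Theorem~\ref{log_3}.
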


\begin{proof} 
To prove the lemma we launch an algorithm such that on every step we change $a_j$, $d_i$, $N$, $k$, $s$ by $a'_j$, $d'_i$, $N'$, $k'$, $s'$, respectively, such that the new function
$$
f'(l)\,=\,\sum_{i=1}^{k'} (d_i')^l-\sum_{j=0}^{N'} (a_j')^l,
$$
\noindent grows slower than $f(l).$ Dropping the primes from notation for simplicity, we apply the algorithm again, until we obtain $s=N$ or $s=N+1$. Then we prove that after the algorithm stops we obtain the monotonically increasing function. Thus, the function $f(l)$ we have started with is also monotonically increasing. Let us describe the step of the algorithm.

We take $a_{N}$, $a_s$ (i.e. the minimal $a_j$ which is not equal to 1) and we increase and decrease them, respectively, by the same integer $c$ with the properties

\begin{gather}
a_s-c \geqslant 1;\label{a_s-c}\\
a_{N}+c \leqslant d_{k}.\label{a_N-1+c}
\end{gather}

\noindent  Let us take the maximal integer number $c$ such that at least one of the inequalities~\mbox{(\ref{a_s-c})--(\ref{a_N-1+c})} is an equality and the other inequality is true, i.e. 
$$
c=\min\{a_s-1,d_k-a_N\}.
$$

 If (\ref{a_s-c}) is an equality and  (\ref{a_N-1+c}) is a strict inequality, we increase~$s$ by 1, i.e. put $s'=s+1$, $N'=N$, $k'=k$, change $a_j$, $d_i$ by $a'_j$, $d'_i,$ respectively, which are defined as follows:
\begin{gather*}
a'_{s'-1}=1; \;\; a'_{s'}=a_{s+1}; \;\; a'_j=a_j \;\; \text{for} \;\;  j\neq s'-1, s',N'; \;\; a'_{N'}=a_{N}+c; \\
 d'_i=d_i \;\; \text{for} \;\; 1 \leqslant i \leqslant k'.
\end{gather*}

 If (\ref{a_s-c}) is a strict inequality and (\ref{a_N-1+c}) is an equality, 
we decrease $k$ and~$N$ by~1, i.e. put  $k'=k-1$, $N'=N-1$,  $s'=s$,  change $a_j$, $d_i$ by $a'_j$, $d'_i$, respectively, which are defined as follows: 
$$
a'_{s'}=a_s-c; \;\; a'_j=a_j \;\; \text{for} \;\;  j\neq s'; \;\ d'_i=d_i \;\; \text{for} \;\; 1 \leqslant i \leqslant k'.
$$

 If both (\ref{a_s-c}) and  (\ref{a_N-1+c}) are equalities, we increase $s$ by 1,  decrease $k$ and~$N$ by~1, i.e. put  $k'=k-1$,  $N'=N-1$, $s'=s+1$,  change $a_j$, $d_i$ by $a'_j$, $d'_i$, respectively, which are defined as follows:  
$$
a'_{s'-1}=1; \;\; a'_{s'}=a_{s+1}; \;\; a'_j=a_j \;\; \text{for} \;\;  j\neq s'-1, s'; \;\; d'_i=d_i \;\; \text{for} \;\; 1 \leqslant i \leqslant k'.
$$

 By Lemma \ref{LemmaAT} the function $-(a_N+c)^l-(a_s-c)^l$ grows slower than $-a_N^l-a^l_s$. Thus, in each of the above three cases the new function
$$
f'(l)\,=\,\sum_{i=1}^{k'} (d_i')^l-\sum_{j=0}^{N'} (a_j')^l,
$$
\noindent grows slower than $f(l).$ Also note that all the conditions of the lemma are satisfied for~$a'_j,d'_i,N',k',s'.$

 The algorithm stops when we have either $s=N$, or $s=N+1$. Note that the case $s=N+1$ is possible if and only if at the last step   both (\ref{a_s-c}) and  (\ref{a_N-1+c}) are equalities. From (\ref{lemmasad}) we get that after the algorithm stops we have $k \geqslant 1.$ In the first case we need to prove that $f(l)=\sum^k_{i=1}d^l_i-a^l_N-s$ is monotonically increasing. We can rewrite this function as 
$$
f(l)=\left(d_k^l-a^l_{N}\right)+\sum^{k-1}_{i=1}d_i^l-s
$$
\noindent and get that $f(l)$ is monotonically increasing function as a sum of monotonically increasing functions. In the second case we need to rpove that $f(l)=\sum^k_{i=1}d^l_i-N-1$ is monotonically increasing, which is obviously true.

\end{proof}

\section{Inequalities on sums of $l$-th degrees}\label{appendfortheoremlog_3}

In this appendix we prove some computational lemmas which are needed in Section \ref{prooflog_3}.

\begin{Lemma}\label{lemmaallbut1a=1} 
Let $N$ and $k$ be positive integer numbers, $a_0,a_1,\ldots, a_N$ be positive rational numbers and $d_1,d_2,\ldots, d_k$ be positive integer numbers such that the following inequalities hold
\begin{gather}
N-k \geqslant 2;\label{lemma5.3N-k>2}\\
1=a_0=a_{1}=\ldots =a_{s-1}<a_s \leqslant \ldots \leqslant a_{N-1} \leqslant a_N;\\
a_N \geqslant 2;\label{lemma5.3a>2}\\
d_k \geqslant 2a_N;\label{lemma5.3appd>2a}\\
d_i > a_{N-k+i}  \quad \text{for} \quad 1 \leqslant i \leqslant k-1.\label{lemma5.3d>a}
\end{gather}
\noindent Assume that $1 \leqslant \#\{a_j \mid a_j \neq 1\} \leqslant k.$ Then for $l=\lceil \log_3(N-k+2) \rceil$ the inequality 
\begin{equation}\label{eg:lemmaallbuta=1maininequality}
\sum_{i=1}^k d_i^l \geqslant \sum_{j=0}^N a_j^l
\end{equation}
\noindent holds true.
\end{Lemma}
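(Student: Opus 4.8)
The strategy is to compare the left- and right-hand sides of~(\ref{eg:lemmaallbuta=1maininequality}) term by term, using the hypotheses to pair each $a_j$ with a $d_i$ that dominates its $l$-th power (possibly after splitting off the large contribution of $d_k$). Write $s=\#\{j\mid a_j=1\}$, so that $N+1-s=\#\{j\mid a_j\neq 1\}$, and by assumption $1\leqslant N+1-s\leqslant k$. Using $a_j\leqslant 2$ is \emph{not} available here (only $a_N\geqslant 2$ is assumed), so instead I would exploit~(\ref{lemma5.3d>a}): for $1\leqslant i\leqslant k-1$ we have $d_i>a_{N-k+i}$, hence $d_i^l\geqslant (a_{N-k+i}+1)^l> a_{N-k+i}^l$ for $l\geqslant 1$, and in fact $d_i^l\geqslant a_{N-k+i}^l+1$ when $a_{N-k+i}=1$. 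Combined with~(\ref{lemma5.3appd>2a}), $d_k^l\geqslant 2^l a_N^l$, which leaves a surplus of $(2^l-1)a_N^l$ over the single term $a_N^l$.

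\textbf{Key steps.} First I would bound the right-hand side: $\sum_{j=0}^N a_j^l = s + \sum_{j=s}^N a_j^l$, and among the $N+1-s$ indices with $a_j\neq 1$, the largest is $a_N$, so
$$
\sum_{j=0}^N a_j^l \leqslant s + a_N^l + \sum_{j=s}^{N-1} a_j^l,
$$
where the last sum has $N-s$ terms (some possibly equal to $a_N$). Next, from~(\ref{lemma5.3d>a}) applied to the top $k-1$ weights $a_{N-k+1}\leqslant\cdots\leqslant a_{N-1}$ one gets $\sum_{i=1}^{k-1}d_i^l\geqslant \sum_{i=1}^{k-1}a_{N-k+i}^l$; since $k-1\geqslant N-s$ would finish things trivially, the real work is when $k-1 < N-s$, i.e. the number of non-unit weights exceeds... no — actually $N+1-s\leqslant k$ forces $N-s\leqslant k-1$, so the $k-1$ dominating inequalities cover \emph{all} of $a_{N-k+1},\ldots,a_{N-1}$ \emph{and then some} of the $a_j=1$. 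Concretely, the top $k-1$ values $d_1,\ldots,d_{k-1}$ dominate $a_{N-k+1}^l,\ldots,a_{N-1}^l$ (which includes every non-unit weight except $a_N$, plus $k-1-(N-1-s) = k-N+s$ copies of $1^l=1$, and each such $d_i\geqslant 2$ gives $d_i^l\geqslant 2^l> 1$). Finally $d_k^l\geqslant 2^l a_N^l\geqslant a_N^l + (2^l-1)a_N^l$. Assembling:
$$
\sum_{i=1}^k d_i^l \geqslant \sum_{j=s}^{N-1}a_j^l + (k-N+s)2^l + a_N^l + (2^l-1)a_N^l,
$$
so it suffices that $(k-N+s)2^l + (2^l-1)a_N^l \geqslant s$. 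Using $a_N\geqslant 2$ and $2^l-1\geqslant N-k+1$ (which holds since $l=\lceil\log_3(N-k+2)\rceil$ gives $2^l\geqslant 2^{\log_3(N-k+2)} > N-k+2$ because $l\geqslant 2$ by Lemma~\ref{lemmaaboutconic} and $2^{\log_3 x}=x^{\log_3 2}$, so a slightly more careful elementary estimate is needed here), one reduces to an inequality purely in $N,k,s,l$ that I would verify directly, splitting on whether $s$ is large or small.

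\textbf{Main obstacle.} The delicate point is the comparison $2^l$ versus $N-k+2$ when $l=\lceil\log_3(N-k+2)\rceil$: we only know $3^{l-1}<N-k+2\leqslant 3^l$, which gives $2^l \geqslant 2^{\log_3(N-k+2)} = (N-k+2)^{\log_3 2}$, an exponent $\log_3 2\approx 0.63$ less than $1$, so $2^l$ need not exceed $N-k+2$ outright. I expect the correct route is to not demand $2^l\geqslant N-k+2$ but rather to use that the surplus terms are multiplied by $a_N^l\geqslant 2^l$ together with the extra $(k-N+s)$ factors of $2^l$, so that the total surplus is of order $(N-k)\cdot 2^l$ which does beat $s\leqslant N+1$; making this bookkeeping precise — in particular handling the boundary case $k=N-1$ (the minimal value allowed by~(\ref{lemma5.3N-k>2})) and the case $s=N$ (only one non-unit weight $a_N$, forcing $d_k^l\geqslant 2^l a_N^l$ to carry almost everything) — is where the care lies, and I would isolate those extreme cases first before doing the generic estimate.
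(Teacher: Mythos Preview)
Your approach is essentially the paper's own: isolate the $s$ unit weights, pair $d_k$ with $a_N$ using $d_k\geqslant 2a_N$, and pair $d_1,\ldots,d_{k-1}$ with $a_{N-k+1},\ldots,a_{N-1}$ using~(\ref{lemma5.3d>a}). You correctly reduce to an inequality of the form $(k-N+s-1)\cdot 2^l+(2^l-1)a_N^l\geqslant s$ (note: your count ``$k-1-(N-1-s)=k-N+s$'' is off by one, since there are $N-s$, not $N-1-s$, non-unit weights among $a_{N-k+1},\ldots,a_{N-1}$; the correct surplus count is $k-N+s-1$, which is still $\geqslant 0$ by the hypothesis $N+1-s\leqslant k$).

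The gap is in your ``main obstacle'' paragraph: you correctly see that $2^l\geqslant N-k+2$ is not available, but you do not find the substitute. The missing observation is simply that the surplus carries an \emph{extra} factor of $a_N^l\geqslant 2^l$, so what you actually need is $(2^l-1)\cdot 2^l\geqslant N-k+2$, i.e.\ $4^l-2^l\geqslant N-k+2$. This follows from the elementary estimate $4^l-2^l>3^l$ for $l\geqslant 2$ (Lemma~\ref{4^l>2^l+3^l}) together with $3^l\geqslant N-k+2$, which is immediate from $l=\lceil\log_3(N-k+2)\rceil$. Once you have $(2^l-1)a_N^l\geqslant(2^l-1)2^l>N-k+2$, the reduced inequality is immediate:
\[
(k-N+s-1)\cdot 2^l+(2^l-1)a_N^l\;\geqslant\;(k-N+s-1)+(N-k+2)\;=\;s+1\;>\;s,
\]
using only $2^l\geqslant 1$ and $k-N+s-1\geqslant 0$. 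No case analysis on ``$s$ large or small'' or on $k=N-1$ is needed; your anticipated bookkeeping difficulties dissolve once the $4^l-2^l>3^l$ estimate is in hand.
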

\begin{proof}
 We have $N-s+1 \leqslant k.$ We need to prove that
\begin{equation}\label{eg:lemmaallbuta=1maininequalitys}
\sum_{i=1}^k d_i^l -\sum_{j=s}^N a_j^l \geqslant s.
\end{equation}

\noindent By (\ref{lemma5.3appd>2a}) we have 
$$
\sum_{i=1}^k d_i^l -\sum_{j=s}^N a_j^l \geqslant a_N^l(2^l-1)+\sum_{i=1}^{k-1} d_i^l-\sum_{j=s}^{N-1}a_j^l
$$
\noindent  Taking into account (\ref{lemma5.3d>a}) and $N-s+1 \leqslant k$  we obtain
\begin{equation}\label{eq:sumd-a}
\sum_{i=1}^{k-1} d_i^l-\sum_{j=s}^{N-1}a_j^l > k-1.
\end{equation}

\noindent Thus, using (\ref{lemma5.3a>2}) and (\ref{eq:sumd-a}), we get 
$$
a_N^l(2^l-1)+\sum_{i=1}^{k-1} d_i^l-\sum_{j=s}^{N-1}a_j^l \geqslant 2^l(2^l-1)+k-1.
$$

\noindent By (\ref{lemma5.3N-k>2}) we obtain $l \geqslant 2.$ So  by Lemma \ref{4^l>2^l+3^l} and the definition of $l$ we have 
\begin{equation}\label{eq:3^l>N-k+2B}
2^l(2^l-1) \geqslant 3^l \geqslant N-k+2.
\end{equation}

\noindent So by (\ref{eq:3^l>N-k+2B}) we have  
\begin{equation}\label{eq:s-k+1}
2^l(2^l-1)+k-1 \geqslant N-k+2+k-1=N+1 \geqslant s,
\end{equation}

\noindent which proves the inequality (\ref{eg:lemmaallbuta=1maininequalitys}).

\end{proof}

\begin{Lemma}\label{case3B}
Let $d$ and $N \geqslant 3$ be  positive integer numbers, $a_0,a_1,\ldots, a_N$ be rational numbers. Assume that the following holds
\begin{gather}
1=a_0 = a_1 = \ldots = a_{s-1}<a_s \leqslant \ldots \leqslant a_{N-1} \leqslant a_N;\label{case3Ba0<aN}\\
d=m\cdot a_0 \cdot \ldots \cdot a_N \;\; \text{for some positive integer} \;\; m;\label{case3Bdivisib}\\
a_N \geqslant 3;\label{case3a>3}\\
d \geqslant 2a_N.\label{case3d>a}
\end{gather}
\noindent Assume also that all $a_j$ with a possible exception of $a_s$ are integers and~\mbox{$s \leqslant N-1.$} Then if 
$
l=\lceil \log_3(N+1)\rceil,
$
\noindent we have
\begin{equation}\label{case3Bmaininequality}
 d^l \geqslant \sum_{j=0}^N a_j^l.
\end{equation}
\end{Lemma}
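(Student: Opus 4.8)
The plan is to prove the coefficient inequality (\ref{case3Bmaininequality}) directly, exploiting that there is only a single degree. Write $r$ for the number of weights exceeding $1$; by (\ref{case3Ba0<aN}) together with the hypothesis $s\le N-1$ we have $r=N+1-s\ge 2$, and the $s$ weights equal to $1$ contribute exactly $s$ to the right‑hand side of (\ref{case3Bmaininequality}), so it suffices to show
$$
d^l \;\ge\; s+\sum_{j=s}^{N}a_j^l .
$$
Since $l=\lceil\log_3(N+1)\rceil$ we have $3^l\ge N+1=s+r$, hence $s\le 3^l-r$; and each of the $r$ terms $a_s^l,\dots,a_N^l$ is at most $a_N^l$, so $\sum_{j=s}^N a_j^l\le r\,a_N^l$. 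Therefore it is enough to establish
$$
d^l \;\ge\; 3^l-r+r\,a_N^l .
$$

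For the lower bound on $d$ I would distinguish two cases according to $r$. If $r=2$, then the divisibility hypothesis (\ref{case3Bdivisib}) only yields $d\ge a_{N-1}a_N$, which can be nearly as small as $a_N$ because $a_{N-1}=a_s$ is allowed to be non‑integral; here I would instead use (\ref{case3d>a}), i.e.\ $d\ge 2a_N$, so $d^l\ge 2^l a_N^l$, and the required inequality reduces to $(2^l-2)\,a_N^l\ge 3^l-2$, which follows at once from $a_N\ge 3$ (\ref{case3a>3}) and $2^l\ge 4>3$ for $l\ge 2$. If $r\ge 3$, then $a_{s+1},\dots,a_{N-1}$ are $r-2$ integers each at least $2$, so $d\ge a_0\cdots a_N\ge 2^{\,r-2}a_N$, whence $d^l\ge 2^{(r-2)l}a_N^l$; using $a_N\ge 3$ it then remains to check $(2^{(r-2)l}-r)\,a_N^l\ge 3^l-r$, and this holds as soon as $2^{(r-2)l}\ge r$, which is clear since $2^{(r-2)l}\ge 4^{\,r-2}\ge r$ for $l\ge 2$ and $r\ge 3$ (Bernoulli's inequality).

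Note that $N\ge 3$ forces $l=\lceil\log_3(N+1)\rceil\ge 2$, which is exactly what gives all the numerical estimates enough slack; I would carry these out in a couple of lines, citing Lemma~\ref{4^l>2^l+3^l} where convenient. The one genuinely delicate point is the case $r=2$: the product bound coming from divisibility is by itself too weak, so hypothesis (\ref{case3d>a}) is doing essential work there, and one must keep in mind that $a_s$ need not be an integer. Everything else is a routine chain of elementary inequalities.
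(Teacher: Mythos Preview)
Your argument is correct and somewhat cleaner than the paper's. One small wording slip: in the case $r\ge 3$ you assert that $(2^{(r-2)l}-r)\,a_N^l \ge 3^l - r$ ``holds as soon as $2^{(r-2)l}\ge r$''. Strictly speaking you need $2^{(r-2)l}\ge r+1$, so that together with $a_N\ge 3$ the left side is at least $3^l$; but since your Bernoulli estimate actually gives $4^{r-2}>r$ strictly for every $r\ge 3$, this is harmless.

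The main difference from the paper is the case decomposition. The paper first disposes of the range $d\ge 3a_N$ directly (there $d^l\ge 3^l a_N^l\ge (N+1)a_N^l$ already dominates the whole right-hand side), and then, when $2a_N\le d<3a_N$, uses the divisibility (\ref{case3Bdivisib}) to pin down the remaining configurations: one is forced into either $s=N-1$ with $a_{N-1}<3$, or $s=N-2$ with $a_{N-1}=2$ and $a_s<2$, and each of these two subcases is checked by hand. Your split by $r=N+1-s$ is more uniform: for $r\ge 3$ the divisibility already yields $d\ge 2^{\,r-2}a_N$, which is strong enough without further subcases, and only $r=2$ genuinely needs the separate hypothesis (\ref{case3d>a}). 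This trades the paper's two ad hoc verifications for a single slightly more abstract estimate. Incidentally, Lemma~\ref{4^l>2^l+3^l} is not actually needed anywhere in your argument (nor in the paper's proof of this lemma), so you can drop that reference.
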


\begin{proof} Since $N \geqslant 3,$ we have $l \geqslant 2.$ So first of all, assume that $d \geqslant 3a_N.$ We have to prove that 
\begin{equation}\label{case3Bk=1}
3^l a_N^l -\sum_{j=s}^{N}a_j^l \geqslant s.
\end{equation}
\noindent By (\ref{case3Ba0<aN}) we have
$$
\sum_{j=s}^N a_j^l \leqslant a_N^l(N-s+1).
$$
\noindent So we have 
$$
3^l a_N^l -\sum_{j=s}^{N}a_j^l \geqslant 3^la_N^l-a_N^l(N-s+1).
$$
\noindent As $3^l \geqslant N+1,$ we get  
$$
3^la_N^l-a_N^l(N-s+1) \geqslant sa_N^l \geqslant s,
$$
\noindent which proves the inequality (\ref{case3Bk=1}).

Now we assume that $3a_N > d \geqslant 2a_N.$ By (\ref{case3Bdivisib}) we have 
$$
d=a_N(ma_{N-1} \ldots a_s)=a_N\cdot T,
$$
\noindent where $m$ is a positive integer and $T=ma_{N-1} \ldots a_s.$  The number $T$ is at least~3, unless either $s=N-2$ with  $a_{N-1}=2$ and $a_{N-2} < 2,$ or $s=N-1$ and $a_{N-1}<3.$

First of all, consider the case when  $s=N-2$ with $a_{N-1}=2$ and $a_{N-2} < 2.$  We need to prove that
$$
2^la_N^l \geqslant a_N^l+2^l+2^l+N-2,
$$
\noindent or, equivalently,
$$
(2^l-1)a_N^l-2^l-2^l-N+2 \geqslant 0.
$$

\noindent By (\ref{case3a>3}) we get that
$$
(2^l-1)a_N^l-2^l-2^l-N+2 \geqslant 3^l(2^l-1)-2\cdot 2^l-N+2.
$$
\noindent As $3^l \geqslant N+1,$ we have 
\begin{gather*}
3^l(2^l-1)-2\cdot 2^l-N+2 \geqslant (N+1)(2^l-1)-2\cdot 2^l-N+2=\\
=2^l(N-1)-2(N-1)-1 \geqslant 0,
\end{gather*}

\noindent because $l \geqslant 2.$ 

Now consider the case when  $s=N-1$ and $a_{N-1}<3.$ We need to prove that
$$
2^la_N^l \geqslant a^l_N+3^l+N-1.
$$
\noindent In other words, as $a_N \geqslant 3,$ we need to prove that
$$
3^l(2^l-2) \geqslant N-1,
$$
\noindent which is true, because $l \geqslant 2$ and $3^l \geqslant N+1.$
\end{proof}

\begin{Lemma}\label{lemmaapB}
Let $N,k,d_1,d_2,\ldots, d_k$ be positive integer numbers, $a_0,a_1,\ldots, a_N$ be rational numbers  such that the following inequalities hold
\begin{gather}
N-k \geqslant 2;\label{appBN-k>2}\\
 d_1 \leqslant d_{2} \leqslant \ldots \leqslant d_{k-1}  \leqslant d_{k};\\
1=a_0 = a_1 = \ldots = a_{s-1}<a_s \leqslant \ldots \leqslant a_{N-1} \;\; \text{and} \;\; a_{N-2} \leqslant a_N;\label{appBa0<aN}\\
d_i \,>\, a_{N-k+i} \quad \text{for} \quad 1 \leqslant i \leqslant k-1;\label{appBd>a}\\
d_i \neq 1 \quad \text{for all} \quad i;\label{appBd>1}\\
a_N \geqslant 3;\label{appBa>3}\\
d_1\cdot \ldots \cdot d_k=m\cdot a_0 \cdot \ldots \cdot a_N \;\; \text{for some positive integer} \;\; m;\label{appBdivisib2}\\
d_k \geqslant 2a_N.\label{appBd>2a}
\end{gather}
 \noindent Assume that all $a_j$ with a possible exception of $a_s$ and $a_{N-1}$ are integer numbers.  If~$l=\lceil \log_3(N-k+2)\rceil,$ then we have
\begin{equation}\label{appBmaininequality}
\sum_{i=1}^k d_i^l \geqslant \sum_{j=0}^N a_j^l.
\end{equation}
\end{Lemma}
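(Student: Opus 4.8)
The plan is to follow the template of Lemma~\ref{case3B}, which is essentially the case $k=1$: one pays for each weight $a_j$ using a suitable degree $d_i$ and feeds the left-over surplus into the weights that are not yet accounted for. Since $1=a_0=\dots=a_{s-1}$, inequality~\eqref{appBmaininequality} is equivalent to
$$
\sum_{i=1}^k d_i^l-\sum_{j=s}^N a_j^l \geqslant s .
$$
Two numerical facts will be used throughout. First, $l\geqslant 2$: indeed $N-k\geqslant 2$ by~\eqref{appBN-k>2}, so $N-k+2\geqslant 4$ and hence $l=\lceil\log_3(N-k+2)\rceil\geqslant 2$. Second, by the very choice of $l$ we have $3^l\geqslant N-k+2$, and therefore, since $l\geqslant 2$, Lemma~\ref{4^l>2^l+3^l} gives $2^l(2^l-1)=4^l-2^l\geqslant 3^l\geqslant N-k+2$.

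For the main estimate I would first pay for the ``top'' weights $a_{N-k+1},\dots,a_{N-1}$. For $1\leqslant i\leqslant k-1$ hypothesis~\eqref{appBd>a} gives $d_i>a_{N-k+i}$; as $d_i$ is an integer and $a_{N-k+i}\geqslant 1$, this forces $d_i\geqslant a_{N-k+i}+1$ and hence $d_i^l\geqslant a_{N-k+i}^l+1$, at least for every $i$ such that $a_{N-k+i}$ is an integer, i.e. with at most two exceptions (the indices for which $a_{N-k+i}\in\{a_s,a_{N-1}\}$). Summing, this accounts for $\sum_{j=N-k+1}^{N-1}a_j^l$ plus a surplus of about $k-1$. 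Next I pay for $a_N$ using $d_k$: by~\eqref{appBd>2a} and~\eqref{appBa>3},
$$
d_k^l\geqslant 2^l a_N^l\geqslant a_N^l+(2^l-1)3^l ,
$$
which covers $a_N^l$ and still leaves a surplus of at least $(2^l-1)3^l$. It then remains to absorb the weights $a_s,\dots,a_{N-k}$ (of which there are none when $s>N-k$) and the $s$ weights equal to $1$. Since $a_j\leqslant a_{N-2}\leqslant a_N$ for $j\leqslant N-2$ and $a_{N-1}<d_{k-1}\leqslant d_k$, all these weights are controlled, and the inequality $3^l\geqslant N-k+2$ together with the surpluses just collected closes the gap, precisely as in the final displays of the proof of Lemma~\ref{case3B}; Lemmas~\ref{LemmaAT} and~\ref{lemmaAT2} provide the elementary mass-transfer inequalities that make this redistribution valid. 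This argument goes through whenever $d_k\geqslant 3a_N$, and also whenever $k$ is small (the case $k=1$ being Lemma~\ref{case3B} itself) or $s$ is close to $N$, so that few weights are left to be absorbed.

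The remaining, and principal, difficulty is the tight range $2a_N\leqslant d_k<3a_N$, where the surplus produced by $d_k$ alone is insufficient. Here one must bring in the divisibility hypothesis~\eqref{appBdivisib2}. Multiplying the inequalities~\eqref{appBd>a} gives $\prod_{i=1}^{k-1}d_i>\prod_{j=N-k+1}^{N-1}a_j$, so from $d_1\cdots d_k=m\,a_0\cdots a_N=m\,a_s\cdots a_N$ one obtains (when $s\leqslant N-k$, the case $s>N-k$ having just been dealt with)
$$
2a_N\leqslant d_k< m\,a_N\prod_{j=s}^{N-k}a_j ,
$$
which forces $m\prod_{j=s}^{N-k}a_j$ to be small, at most $2$ in the range $d_k<3a_N$, and hence pins the configuration down to the three exceptional shapes appearing in the proof of Lemma~\ref{case3B} (with $a_s$, respectively $a_{N-1}$, in the role of the unique possibly non-integer weight, so that only the weak bounds $a_s<2$ or $a_{N-1}<3$ are available). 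In each of these finitely many shapes the required inequality reduces to a short numerical check using $l\geqslant 2$ and $3^l\geqslant N-k+2$, carried out exactly as in the last two cases of Lemma~\ref{case3B}. I expect this case split, and the bookkeeping around the two possibly non-integer slots $a_s$ and $a_{N-1}$, to be the bulk of the work; the rest is the routine redistribution argument via Lemmas~\ref{LemmaAT} and~\ref{lemmaAT2}.
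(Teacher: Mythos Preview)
Your treatment of the range $d_k\geqslant 3a_N$ is essentially correct, and you rightly isolate the window $2a_N\leqslant d_k<3a_N$ as the crux. But the divisibility step you propose there has the inequality pointing the wrong way. From $\prod_i d_i = m\prod_j a_j$ and $\prod_{i<k}d_i>\prod_{j=N-k+1}^{N-1}a_j$ one gets
\[
m\prod_{j=s}^{N-k}a_j \;=\; \frac{d_k}{a_N}\cdot\frac{\prod_{i=1}^{k-1}d_i}{\prod_{j=N-k+1}^{N-1}a_j} \;>\; \frac{d_k}{a_N}\;\geqslant\;2,
\]
which is a \emph{lower} bound, not an upper one. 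So the claim that $m\prod_{j=s}^{N-k}a_j$ is ``at most~$2$ in the range $d_k<3a_N$'' is false, and nothing pins the configuration down to finitely many shapes. Concretely, when $s\leqslant N-k$ the weights $a_s,\dots,a_{N-k}$ can each be as large as $a_N$, and the surplus $(2^l-1)a_N^l$ from $d_k$ together with the $O(k)$ units from~(\ref{appBd>a}) need not cover $\sum_{j=s}^{N-k}a_j^l$, which may be as large as $(N-k-s+1)a_N^l$; you only know $3^l\geqslant N-k+2$, not $2^l\geqslant N-k+2$.

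The paper does not attempt a direct case analysis for general $k$. Its device is an iterative \emph{multiplicative} redistribution: at each step replace $(a_s,a_{N-1})$ by $(a_s/p,\,p\,a_{N-1})$ with $p=\min\{a_s,\,d_{k-1}/a_{N-1}\}$. By Lemma~\ref{lemmaAT2} this can only increase $\sum_j a_j^l$, so it suffices to verify~(\ref{appBmaininequality}) for the new data; the product $\prod_j a_j$ is unchanged, so~(\ref{appBdivisib2}) persists; and whenever $p\,a_{N-1}=d_{k-1}$ one cancels this pair against $d_{k-1}$, decreasing both $N$ and $k$ by~$1$ while keeping $N-k$ (hence $l$) fixed. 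Iterating lands in one of the base cases $s\geqslant N-1$ (Lemma~\ref{lemmaallbut1a=1}) or $k=1$ (Lemma~\ref{case3B}), and only in the latter is~(\ref{appBdivisib2}) actually used. You cite Lemma~\ref{lemmaAT2} but never deploy it; that iterated multiplicative transfer is precisely the missing ingredient.
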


\begin{proof} 

To prove the lemma we launch an algorithm such that on every step we change the numbers \mbox{$a_j$, $d_i$, $N$, $k$, $s$} by $a'_j$, $d'_i$, $N'$, $k'$, $s'$, respectively, such that
$$
\sum_{i=1}^{k} d_i^l - \sum_{j=0}^{N} a_j^l \geqslant \sum_{i=1}^{k'}(d'_i)^l - \sum_{j=0}^{N'}  (a'_j)^l.
$$
\noindent Then dropping the primes from notation for simplicity we apply the algorithm again, until we obtain $a_j,$ $d_i,$ $N,$ $k,$ $s$ satisfying certain strong restrictions. We prove that after the algorithm stops we obtain non-negative value
$$
\sum_{i=1}^{k} d_i^l - \sum_{j=0}^{N} a_j^l \geqslant 0.
$$

\noindent Thus, the value we have started with is also non-negative. Let us describe the step of the algorithm. 
 We take $a_{N-1}$ and $a_s$, then we multiply and divide them, respectively, by the same rational number $p$ with the properties 
\begin{gather}
\frac{a_s}{p} \geqslant 1;\label{eq:algforas}\\
a_{N-1}\cdot p \leqslant d_{k-1}.\label{eq:algforaN-1}
\end{gather}

  Let us take the maximal $p$ such that at least one of the inequalities (\ref{eq:algforas}) and~(\ref{eq:algforaN-1}) is an equality and the other inequality is true, i.e. 
$$
p=\min\{a_s,\frac{d_{k-1}}{a_{N-1}}\}.
$$

 If (\ref{eq:algforas}) is an equality and (\ref{eq:algforaN-1}) is a strict inequality, we increase $s$ by~1, i.e. put $s'=s+1$, $N'=N$, $k'=k$,  change $a_j$, $d_i$ by~\mbox{$a'_j$, $d'_i,$} respectively, which are defined as follows:
\begin{gather*}
a'_{s'-1}=1; \;\; a'_{s'}=a_{s+1}; \;\; a'_{j}=a_j \;\; \text{for} \;\;  j \neq s'-1,s',N'-1; \;\; a'_{N'-1}=p \cdot a_{N-1};\\
 d'_i=d_i \;\; \text{for} \;\; 1 \leqslant i \leqslant k'.
\end{gather*}
\noindent After the step of the algorithm we have $N-k+2=N'-k'+2$ and all conditions of the lemma still hold true.

If (\ref{eq:algforas}) is a strict inequality and (\ref{eq:algforaN-1}) is an equality, we decrease~$k$ and $N$ by~1, i.e. 
$$
k'=k-1; \;\; N'=N-1; \;\; s'=s;
$$
\noindent change $a_j$, $d_i$ by $a'_j$, $d'_i,$ respectively, which are defined as follows:
$$
a'_{s'}=\frac{a_s}{p}; \;\; a'_j=a_j \;\; \text{for}\;\; j \neq s',N'; \;\; a'_{N'}=a_N; \;\; d'_i=d_i \;\; \text{for} \;\; 1 \leqslant i \leqslant k' .
$$
\noindent After the step of the algorithm we have $N-k+2=N'-k'+2$ and the conditions  of the lemma still hold true, and, additionally, we have the strengthening of the condition (\ref{appBa0<aN}):
\begin{equation}\label{appBa<aNstrong}
 a'_0 \leqslant a'_{1} \leqslant \ldots \leqslant a'_{N-1} \leqslant a'_N.
\end{equation}
\noindent and $a_{N'-1}$ an integer number, unless $s'=N'-1.$

If both (\ref{eq:algforas}) and  (\ref{eq:algforaN-1}) are equalities, we increase $s$ by 1, decrease $k$ and $N$ by~1, i.e.  $k'=k-1$, $N'=N-1$, $s'=s+1$, change $a_j$, $d_i$ by $a'_j$, $d'_i,$ respectively, which are defined as follows:
\begin{gather*}
a'_{s'-1}=1; \;\; a'_{s'}=a_{s+1}; \;\; a'_{j}=a_j \;\; \text{for} \;\;  j \neq s'-1,s',N'; \;\; a'_{N'}=a_{N};\\
 d'_i=d_i \;\; \text{for} \;\; 1 \leqslant i \leqslant k'.
\end{gather*}
\noindent After the step of the algorithm we have $N-k+2=N'-k'+2$ and the conditions  of the lemma still hold true, and, additionally, we have the strengthening of the condition (\ref{appBa0<aN}):
\begin{equation}\label{appBa<aNstrong}
 a'_0 \leqslant a'_{1} \leqslant \ldots \leqslant a'_{N-1} \leqslant a'_N.
\end{equation}
\noindent and $a_{N'-1}$ an integer number, unless $s'=N'-1.$

In any case, by Lemma \ref{lemmaAT2} we have 
$$
\left(a_{N-1}\cdot p\right)^l+\left(\frac{a_s}{p}\right)^l \geqslant a_{N-1}^l+a_s^l.
$$ 
\noindent This implies
$$
\sum_{i=1}^{k'}(d'_i)^l - \sum_{j=0}^{N'}  (a'_j)^l   \leqslant  \sum_{i=1}^{k} d_i^l - \sum_{j=0}^{N} a_j^l. 
$$

\noindent Note also that the algorithm does not change $a_N$ and $d_k.$ 

The algorithm stops if one of the following cases is satisfied:

\begin{itemize}
\item[1)]
$s=N-1$ and $k>1$;
\item[2)]
$s=N$ for arbitrary $k$;
\item[3)]
$k=1$ and $s \leqslant N-1$. 
\end{itemize}

\noindent Let us prove that in all these cases the inequality (\ref{appBmaininequality}) holds true. In the first two cases we are done by Lemma \ref{lemmaallbut1a=1}.  In the third case the equality $a_{N-1}\cdot p =  d_{k-1}$ holds in the last step of the algorithm. Thus, the inequality  (\ref{appBa<aNstrong}) holds and $a_{N-1}$ is an integer, unless $s=N-1.$ So in this case we are done  by Lemma \ref{case3B}.

\end{proof}

%
%
%

\providecommand{\bysame}{\leavevmode\hbox to3em{\hrulefill}\thinspace}
\providecommand{\MR}{\relax\ifhmode\unskip\space\fi MR }
\providecommand{\MRhref}[2]{%
  \href{http://www.ams.org/mathscinet-getitem?mr=#1}{#2}
}
\providecommand{\href}[2]{#2}

\end{document}